\providecommand{\tabularnewline}{\\}
\theoremstyle{plain}
\newtheorem{thm}{\protect\theoremname}
\theoremstyle{plain}
\newtheorem{lem}[thm]{\protect\lemmaname}
\theoremstyle{plain}
\newtheorem{cor}[thm]{\protect\corollaryname}
\theoremstyle{plain}
\newtheorem{conjecture}[thm]{\protect\conjecturename}
\newenvironment{proof}[1][\protect\proofname]{\par
\normalfont\topsep6\p@\@plus6\p@\relax
\trivlist
\itemindent\parindent
\item[\hskip\labelsep\scshape #1]\ignorespaces
}{%
\endtrivlist\@endpefalse
}
\providecommand{\proofname}{Proof}
\newcommand\myshade{100}
\definecolor{mylinkcolorhtml}{HTML}{0066cc}
\definecolor{mycitecolorhtml}{HTML}{cc6600}
\definecolor{myurlcolorhtml}{HTML}{0066cc}
\colorlet{mylinkcolor}{mylinkcolorhtml}
\colorlet{mycitecolor}{mycitecolorhtml}
\colorlet{myurlcolor}{myurlcolorhtml}
\theoremstyle{plain}
\newtheorem*{asm*}{\protect\assumptionname}
\theoremstyle{remark}
\newtheorem*{rem*}{\protect\remarkname}
\providecommand{\assumptionname}{Assumption}
\providecommand{\remarkname}{Remark}
\newcommand{\fecabstract}{
The blocking probability of a finite-source bufferless queue is a fixed point of the Engset formula, for which we prove existence and uniqueness.
Numerically, the literature suggests a fixed point iteration.
We show that such an iteration can fail to converge and is dominated by a simple Newton's method, for which we prove a global convergence result.
The analysis yields a new Turán-type inequality involving hypergeometric functions, which is of independent interest.}
\newcommand{\feckeywords}[1]{\par\noindent 
{\small{\em Keywords\/}: #1}}
\newcommand{\fecmsc}[1]{\par\noindent 
{\small{\em MSC2010\/}: #1}}
\newcommand{\feccode}[1]{\par\noindent 
{\small{\em Code\/}: #1}}
\newcommand{\fecpip}[1]{\par\noindent 
{\small{\em Python install via pip\/}: #1}}
\providecommand{\conjecturename}{Conjecture}
\providecommand{\corollaryname}{Corollary}
\providecommand{\lemmaname}{Lemma}
\providecommand{\theoremname}{Theorem}
\begin{document}

\title{Fast Engset computation}

\author{Parsiad Azimzadeh\\
\small Cheriton School of Computer Science,\\
\small University of Waterloo\\
\small Waterloo, Ontario, Canada\\
\small \href{mailto:pazimzad@uwaterloo.ca} {\tt pazimzad@uwaterloo.ca}
\and
Tommy Carpenter\\
\small Cheriton School of Computer Science,\\
\small University of Waterloo\\
\small Waterloo, Ontario, Canada\\
\small \href{mailto:tcarpent@uwaterloo.ca} {\tt tcarpent@uwaterloo.ca}}

\date{}
\maketitle
\begin{abstract}
\fecabstract
\end{abstract}
\feckeywords{Engset formula, teletraffic, hypergeometric functions,
Turán-type inequality}

\fecmsc{33C05, 90B22} \renewcommand{\qedsymbol}{$\blacksquare$}

\feccode{\url{https://github.com/parsiad/fast-engset/releases}}

\fecpip{\texttt{pip install fast-engset}}

\section{Introduction}

The Engset formula is used to determine the blocking probability in
a bufferless queueing system with a finite population of sources.
Applications to \emph{bufferless optical networks} \cite{detti2002performance,zukerman2004teletraffic,le2005scalable,verby2005performance,nail2007stochastic}
have sparked a renewed interest in the Engset model and its generalizations
\cite{cohen1957generalized}. Sztrik provides a literature review
of applications \cite{sztrik2001finite}, including \emph{multiprocessor
performance modeling }and the\emph{ machine interference problem},
in which machines request service from one or more repairmen.  The
analysis herein was inspired by a recent application in \emph{sizing
vehicle pools} for car-shares \cite{carpenter2014sizing}.

The queue under consideration is the $M/M/m/m/N$ queue \cite{kleinrock1975queueing}.\footnote{Subject to some technical assumptions, the Engset formula remains
valid under general distributions (i.e. $G/G/m/m/N$) \cite[Section 5.4]{tijms2003first}.} This is a bufferless queue with $N$ \emph{sources} that can request
service, provided by one of $m$ identical \emph{servers}. When all
$m$ servers are in use, incoming arrivals are \emph{blocked} and
leave the system without queueing. The \emph{Engset formula} is used
to determine the probability $P$ that any random arrival is blocked.
The Engset formula is \cite[Equation (62)]{kubasik1985some}
\begin{equation}
P=\underbrace{\lim_{P^{\prime}\rightarrow P}\frac{\binom{N-1}{m}\left(M(P^{\prime})\right)^{m}}{\sum_{X=0}^{m}\binom{N-1}{X}\left(M(P^{\prime})\right)^{X}}}_{f(P)}\text{ where }M(P)=\frac{\alpha}{1-\alpha\left(1-P\right)}\tag{Engset formula}\label{eq:engset_formula}
\end{equation}
The number of sources $N$, the number of servers $m$, and the\emph{
}offered traffic\emph{ per-source\emph{}\footnote{Some sources represent the \ref{eq:engset_formula} using the \emph{total
}offered traffic $E=N\alpha$ in lieu of $\alpha$. In this case,
$M(P)=E/(N-E(1-P))$.}} $\alpha$ are given as input. 

It is not obvious if any value of $P$ satisfies the \ref{eq:engset_formula},
or if multiple values of $P$ might satisfy it. To the authors' best
knowledge, this work is the first to establish the existence and uniqueness
of a solution (Section \ref{sec:properties_of_the_engset_formula}).

\begin{rem*}

The limit appearing in the \ref{eq:engset_formula} is a technical
detail to avoid (for ease of analysis) the removable discontinuity
at $P=1-1/\alpha$. We mention that $f$ may admit nonremovable discontinuities
at some negative values of $P$ (at which the limit does not exist),
though this does not affect the analysis.

\end{rem*}

\begin{rem*}

Let $\lambda$ be the \emph{idle source initiation rate}, the rate
at which a free source (i.e. one not being serviced) initiates requests,
and $1/\mu$ be the \emph{mean service time}. If $P$ is the blocking
probability, $M(P)=\lambda/\mu$. This substitution removes $P$ from
the right-hand side of the \ref{eq:engset_formula} \cite[Equation (70)]{kubasik1985some}.
However, $\lambda$ is often unknown in practice, and hence this method
is only applicable in special cases, or subject to error produced
from approximating $\lambda$.

\end{rem*}

\section{\label{sec:properties_of_the_engset_formula}Properties of the Engset
formula}

If the number of servers $m$ is zero, any request entering the queue
is blocked ($P=1$). If there are at least as many servers as there
are sources $(m\geq N)$, any request entering the queue can immediately
be serviced ($P=0$). Finally, the case of zero traffic $(\alpha=0)$
corresponds to a queue that receives no requests. We assume the following
for the remainder of this work:

\begin{asm*}

$m$ and $N$ are integers with $0<m<N$. $\alpha$ is a positive
real number.

\end{asm*}

The following lemmas characterize $f$ defined in the \ref{eq:engset_formula}
and are used to establish several results throughout this work:
\begin{lem}
\label{lem:f_is_strictly_decreasing}$f$ is strictly decreasing on
$[0,\infty)$.
\end{lem}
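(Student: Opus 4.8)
The plan is to prove the stronger statement that on $[0,\infty)$ the reciprocal $1/f$ equals an explicit polynomial $G$ in $P$ that is everywhere positive with everywhere positive derivative; then $f=1/G$ is strictly decreasing (and positive and smooth, so the limit in the Engset formula is harmless). To get there, first pass to the variable $M=M(P)$. Since $0<m<N$ we have $\binom{N-1}{m}\neq 0$ and $M(P)\neq 0$, so for $P\neq 1-1/\alpha$,
\begin{equation*}
\frac{1}{f(P)} \;=\; \sum_{X=0}^{m}\frac{\binom{N-1}{X}}{\binom{N-1}{m}}\,M(P)^{X-m} \;=\; \sum_{k=0}^{m}c_k\,M(P)^{-k}, \qquad c_k:=\frac{\binom{N-1}{m-k}}{\binom{N-1}{m}}.
\end{equation*}
The one substantive observation is that $M(P)^{-1}=(1-\alpha(1-P))/\alpha=(P+1/\alpha)-1$, so with $x:=P+1/\alpha$ the right-hand side becomes the polynomial $\sum_{k=0}^{m}c_k(x-1)^k$.

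The heart of the argument is to re-expand $\sum_{k}c_k(x-1)^k$ in powers of $x$ and show the coefficients are nonnegative. Explicitly, I expect the polynomial identity
\begin{equation*}
\binom{N-1}{m}\sum_{k=0}^{m}c_k(x-1)^k \;=\; \sum_{X=0}^{m}\binom{N-1}{X}(x-1)^{m-X} \;=\; \sum_{i=0}^{m}\binom{N-2-i}{m-i}\,x^{i},
\end{equation*}
which can be verified by induction on $m$ using Pascal's rule (equivalently, the coefficient of $x^i$ is $\sum_{k\ge i}c_k\binom{k}{i}(-1)^{k-i}$, which Chu--Vandermonde collapses to $\binom{N-2-i}{m-i}/\binom{N-1}{m}$). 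For $0\le i\le m<N$ every $\binom{N-2-i}{m-i}$ is nonnegative, and the leading coefficient, at $i=m$, is $\binom{N-2-m}{0}=1$. Pinning down this recentering — in particular the sign of every coefficient — is the step I expect to cost the most effort; the rest is bookkeeping.

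Granting the identity, put
\begin{equation*}
G(P):=\frac{1}{\binom{N-1}{m}}\sum_{i=0}^{m}\binom{N-2-i}{m-i}\,(P+1/\alpha)^{i}.
\end{equation*}
On $[0,\infty)$ we have $x=P+1/\alpha>0$, so $G(P)\ge x^m/\binom{N-1}{m}>0$, and since $dx/dP=1$, $G'(P)=\frac{1}{\binom{N-1}{m}}\sum_{i=1}^{m}i\binom{N-2-i}{m-i}x^{i-1}\ge mx^{m-1}/\binom{N-1}{m}>0$. The algebra above shows $f(P)=1/G(P)$ wherever $M(P)$ is finite and $G(P)\neq 0$; since $G$ is continuous and strictly positive on $[0,\infty)$, this holds for all $P\neq 1-1/\alpha$ and extends by continuity to $P=1-1/\alpha$, where $f$ is defined as a limit. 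Hence $f=1/G$ on $[0,\infty)$ and $f'=-G'/G^2<0$, so $f$ is strictly decreasing there.

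A word on why I funnel everything through $G$ rather than analysing the composition $f=g\circ M$ directly: one checks easily that $g(M)=\binom{N-1}{m}M^m/\sum_X\binom{N-1}{X}M^X$ is strictly increasing for $M>0$ (a monotone-likelihood-ratio phenomenon) while $M(P)$ is strictly decreasing, which already disposes of the case $\alpha\le 1$, where $M$ stays positive on $[0,\infty)$. The obstruction is $\alpha>1$: then $M(P)<0$ on $[0,1-1/\alpha)$ and $M\to\pm\infty$ at $P=1-1/\alpha$, and one would have to rule out poles of $g$ on the negative interval that $M$ traverses — precisely the content the polynomial identity renders automatic.
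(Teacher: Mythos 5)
Your proof is correct, and it takes a genuinely different, more elementary route than the paper's. The paper funnels everything through hypergeometric machinery: it first identifies $1/f(P)$ with ${}_{2}F_{1}(1,-m;N-m;1-P-1/\alpha)$ (\prettyref{lem:f_is_a_hypergeometric_reciprocal}), then applies the hypergeometric binomial theorem (\prettyref{lem:hypergeometric_binomial}) twice --- once to re-expand in powers of $P$, and once more, together with the evaluation ${}_{2}F_{1}(a,-b;c;1)=(c-a)_{b}/(c)_{b}$, to show that the resulting $\alpha$-dependent coefficients $c_{Y}=\frac{m^{(Y)}}{(N-m)_{Y}}d_{Y}$ are positive. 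You instead recenter at $x=P+1/\alpha$, where a single convolution identity produces completely explicit, $\alpha$-free, manifestly nonnegative coefficients $\binom{N-2-i}{m-i}$ with leading coefficient $1$. That identity, the one step you flag as uncertain, does hold: the coefficient of $x^{i}$ in $\sum_{X=0}^{m}\binom{N-1}{X}(x-1)^{m-X}$ is $\sum_{j}\binom{j}{i}(-1)^{j-i}\binom{N-1}{m-j}$, which is the coefficient of $y^{m}$ in $\bigl(y^{i}/(1+y)^{i+1}\bigr)(1+y)^{N-1}=y^{i}(1+y)^{N-2-i}$, namely $\binom{N-2-i}{m-i}$; your degenerate check $N=m+1\Rightarrow 1/f=x^{m}$ is consistent with this, and the resulting coefficients agree at $1/\alpha=0$ with the limit expression the paper derives separately inside the proof of \prettyref{thm:existence_and_uniqueness}. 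Your approach buys a one-identity proof with no special-function apparatus, strict monotonicity on the larger interval $(-1/\alpha,\infty)$ rather than just $[0,\infty)$, and an automatic resolution of the removable singularity at $P=1-1/\alpha$ and of possible zeros of the denominator for negative $M$; what the paper's route buys in exchange is the hypergeometric representation itself, which it reuses for the convexity lemma, the inequality of Section \ref{sec:turan}, and the numerical implementation, so your argument replaces this one proof but not that surrounding infrastructure.
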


\begin{lem}
\label{lem:f_is_convex}$f$ is convex on $[1-1/\alpha,\infty)\supset[1,\infty)$.
\end{lem}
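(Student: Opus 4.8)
The plan is to strip away the auxiliary function $M$ by a change of variable, turning $f$ into the reciprocal of a polynomial with positive coefficients, and then to prove a Tur\'an-type inequality for that polynomial.

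\emph{Reduction.} Since $M(P)=\alpha/(1-\alpha(1-P))=1/(P-P_{0})$ with $P_{0}:=1-1/\alpha$, multiplying the numerator and denominator in the \ref{eq:engset_formula} by $M(P)^{-m}=(P-P_{0})^{m}$ gives, for every $P\ge P_{0}$,
\[
f(P)=\frac{\binom{N-1}{m}}{Q(P-P_{0})},\qquad Q(s):=\sum_{j=0}^{m}\binom{N-1}{m-j}s^{j}=\binom{N-1}{m}\,{}_{2}F_{1}\!\left(-m,1;N-m;-s\right).
\]
Under the standing assumption $0<m<N$ every coefficient $\binom{N-1}{m-j}$ $(0\le j\le m)$ is strictly positive, so $Q>0$ on $[0,\infty)$; in particular the right-hand side is real-analytic on a neighbourhood of $[P_{0},\infty)$, which also disposes of the removable discontinuity at $P_{0}$ built into the \ref{eq:engset_formula} and makes $f''$ unambiguous there. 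Because $P\mapsto P-P_{0}$ is an increasing affine bijection of $[P_{0},\infty)$ onto $[0,\infty)$ and $\binom{N-1}{m}>0$, the claim is equivalent to convexity of $h:=1/Q$ on $[0,\infty)$, and since
\[
h''(s)=\frac{2\,Q'(s)^{2}-Q(s)\,Q''(s)}{Q(s)^{3}}
\]
with $Q>0$ there, it suffices to prove the Tur\'an-type inequality $2\,Q'(s)^{2}\ge Q(s)\,Q''(s)$ for all $s\ge 0$.

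\emph{The Tur\'an-type inequality.} I would stress at the outset that this is genuinely the factor-$2$ inequality and not classical log-concavity: already for $N=7$, $m=3$ one has $Q(s)=20+15s+6s^{2}+s^{3}$, which is \emph{not} log-concave on $[0,\infty)$ (its logarithm is strictly convex near $s=0$), so the inequality cannot reduce to log-concavity of $Q$, nor to log-concavity of the binomial coefficients alone. I would instead expand in powers of $s$. Writing $c_{k}:=\binom{N-1}{m-k}$, with $c_{k}:=0$ outside $0\le k\le m$, a short symmetrisation shows that the coefficient of $s^{p}$ in $2Q'(s)^{2}-Q(s)Q''(s)$ equals
\[
\tfrac14\sum_{a+b=w}\bigl(w(w+2)-3(a-b)^{2}\bigr)\,c_{a}c_{b},\qquad w:=p+2,
\]
the sum over nonnegative integers $a,b$. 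Fixing $w$ and viewing $c_{a}c_{b}$ $(a+b=w)$ as an unnormalised law on $a$, this coefficient has the sign of $w(w+2)-3\,\mathbb{E}[(a-b)^{2}]=w(w+2)-12\operatorname{Var}(a)$, using $a-b=2a-w$ and $\mathbb{E}[a]=w/2$ by the symmetry $a\leftrightarrow b$. Now $a\mapsto c_{a}c_{b}=\binom{N-1}{m-a}\binom{N-1}{m-w+a}$ is a product of two log-concave sequences in $a$, hence log-concave, hence unimodal, on its support, which is a symmetric interval of at most $w+1$ integers centred at $w/2$; and a symmetric unimodal law on at most $w+1$ consecutive integers has variance at most that of the uniform law on $w+1$ such integers, namely $w(w+2)/12$. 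Hence every coefficient is nonnegative, which proves the inequality and, with it, the lemma.

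\emph{Main obstacle.} The passage down to $2Q'^{2}\ge QQ''$ is routine; all the content lives in that inequality. The crux is recognising that ordinary log-concavity of $Q$ is too much to hope for and locating the right sharp estimate --- the variance bound $\operatorname{Var}(a)\le w(w+2)/12$, i.e.\ the variance of the uniform law --- and then checking carefully that the support truncation stays symmetric about $w/2$ so that the comparison with the uniform law is legitimate. Equivalently, the heart of the matter is the Tur\'an-type statement that $s\mapsto 1/{}_{2}F_{1}(-m,1;N-m;-s)$ is convex on $[0,\infty)$, which appears to be new.
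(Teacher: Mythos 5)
Your proof is correct, and it reaches the lemma by a genuinely different route for the key step. Both arguments reduce to the same underlying inequality: writing $f(P)=\binom{N-1}{m}/Q(P+1/\alpha-1)$ with $Q(s)=\sum_{j=0}^{m}\binom{N-1}{m-j}s^{j}$, convexity of $f$ on $[1-1/\alpha,\infty)$ amounts to $2Q'(s)^{2}\geq Q(s)Q''(s)$ on $[0,\infty)$, which is exactly the monotonicity of the quotient \eqref{eq:fixed_point_convergence_decreasing_map}, since $A/B$ there is $Q'/Q^{2}$ up to positive constants. Where the paper invokes \prettyref{lem:polynomial_quotient_decreasing} from \cite{karp2010log} and verifies the pairwise coefficient condition $a_{X}b_{X-1}\leq a_{X-1}b_{X}$ via a Cauchy product, Gauss summation, and a parity case split, you expand $2Q'^{2}-QQ''$ directly and show each coefficient is nonnegative through the symmetrized formula $\tfrac14\sum_{a+b=w}\bigl(w(w+2)-3(a-b)^{2}\bigr)c_{a}c_{b}$ (which checks out) combined with a variance comparison: the weights $a\mapsto c_{a}c_{w-a}$ form a symmetric, log-concave, hence unimodal law centred at $w/2$, so it is a mixture of centred uniform layers and its variance is at most $w(w+2)/12$, the variance of the uniform law on $w+1$ consecutive integers. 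This is elementary and self-contained, and your observation that $Q$ itself need not be log-concave (e.g.\ $N=7$, $m=3$) correctly identifies why the factor $2$ is essential and the naive route is closed. Two remarks. First, after applying \eqref{eq:hypergeometric_derivative}, your inequality $2Q'^{2}\geq QQ''$ is precisely \eqref{eq:turan_type_inequality} of \prettyref{thm:turan_type_inequality} with $b=m$ and $c=N-m$, so your argument doubles as an independent proof of that Tur\'an-type inequality for integer parameters (and it extends to real $c>0$, since $k\mapsto m^{(k)}/(c)_{k}$ is log-concave for any $c>0$). Second, the paper's route delivers \emph{strict} decrease of $Q'/Q^{2}$, which is reused in \prettyref{cor:fixed_point_convergence} and in \prettyref{thm:turan_type_inequality}; your version as written gives the non-strict inequality, but strictness follows at once because the constant term $2(c_{1}^{2}-c_{0}c_{2})$ of $2Q'^{2}-QQ''$ is already positive by strict log-concavity of the binomial coefficients, and all remaining coefficients are nonnegative.
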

Owing partly to \prettyref{lem:f_is_strictly_decreasing}, our first
significant result is as follows:
\begin{thm}
\label{thm:existence_and_uniqueness}There exists a unique probability
$P^{\star}$ satisfying the \ref{eq:engset_formula}.
\end{thm}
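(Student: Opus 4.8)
The plan is to pin down $P^{\star}$ with the intermediate value theorem on a subinterval of $[0,1]$ on which $f$ is continuous, and then to read off uniqueness essentially for free from \prettyref{lem:f_is_strictly_decreasing}. Write $g(P):=f(P)-P$ and set $a:=\max\{0,\,1-1/\alpha\}$, so that $a\in[0,1)$. (\prettyref{lem:f_is_convex} is not needed for this argument; it is used later for the Newton iteration.)

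First I would verify that $f$, hence $g$, is continuous on $[a,1]$. For $P\in(a,1]$ one has $1-\alpha(1-P)>0$, so $M(P)$ is finite and, after clearing the factor $(1-\alpha(1-P))^{m}$ from numerator and denominator, $f$ is a ratio of polynomials with a strictly positive denominator; thus $f$ is smooth on $(a,1]$. The only delicate point is $P=a$ in the regime $\alpha\geq1$, where $a=1-1/\alpha$ is precisely the removable discontinuity of the unregularized formula: there $M(P')\to\pm\infty$ as $P'\to a$, but dividing the numerator and denominator of $f$ by $M^{m}$ shows $f(P')\to\binom{N-1}{m}/\binom{N-1}{m}=1$, so the limit in the \ref{eq:engset_formula} gives $f(a)=1$ and $f$ is continuous at $a$. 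I expect this continuity check at $P=1-1/\alpha$ to be the only genuinely fiddly step.

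Next I would evaluate $g$ at the two endpoints. If $\alpha<1$ then $a=0$ and $M(0)=\alpha/(1-\alpha)>0$; since $0<m<N$ forces $\binom{N-1}{X}>0$ for every $0\leq X\leq m$, the number $f(0)$ is a ratio of positive quantities, so $g(0)=f(0)>0$. If $\alpha\geq1$ then $f(a)=1$ by the previous step, so $g(a)=1-(1-1/\alpha)=1/\alpha>0$. At the right endpoint $M(1)=\alpha>0$ and $f(1)=\binom{N-1}{m}\alpha^{m}/\sum_{X=0}^{m}\binom{N-1}{X}\alpha^{X}$; the numerator is exactly the $X=m$ summand of the strictly positive denominator, which also contains the summand $\binom{N-1}{0}=1$, so $0<f(1)<1$ and $g(1)<0$. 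Continuity of $g$ on $[a,1]$ together with $g(a)>0>g(1)$ then gives, by the intermediate value theorem, some $P^{\star}\in(a,1)\subseteq[0,1]$ with $f(P^{\star})=P^{\star}$; being in $[0,1]$, $P^{\star}$ is a probability. Uniqueness is then one line: if $P_{1}<P_{2}$ were probabilities both solving $f(P)=P$, then $P_{1}=f(P_{1})>f(P_{2})=P_{2}$ by \prettyref{lem:f_is_strictly_decreasing}, a contradiction (indeed this argument rules out a second fixed point anywhere on $[0,\infty)$).
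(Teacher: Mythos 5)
Your proof is correct, and it shares the paper's skeleton (intermediate value theorem on $P\mapsto f(P)-P$ for existence, strict monotonicity of $f$ from \prettyref{lem:f_is_strictly_decreasing} for uniqueness), but the decisive endpoint estimate $f(1)<1$ is obtained by a genuinely different and more elementary route. The paper derives $f(1)<1$ from the polynomial representation $1/f(P)=\sum_{Y}c_{Y}P^{Y}$ built in the proof of \prettyref{lem:f_is_strictly_decreasing}: it observes that $\alpha\mapsto 1/f(P;\alpha)$ is strictly decreasing, passes to the limit $\alpha\to\infty$, and checks that the limiting sum equals $1$ at $P=1$. You instead just evaluate the defining ratio at $P=1$, where $M(1)=\alpha>0$, and note that the numerator $\binom{N-1}{m}\alpha^{m}$ is one summand of the denominator, which also contains the positive summand $\binom{N-1}{0}=1$; this one-line argument is cleaner and avoids the hypergeometric machinery entirely for the existence half. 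The price you pay is the explicit continuity check at $P=\max\{0,1-1/\alpha\}$ (the removable singularity), which the paper gets for free because the polynomial representation of $1/f$ with strictly positive coefficients already gives continuity and positivity of $f$ on all of $[0,\infty)$; your check is carried out correctly (clearing the factor $(1-\alpha(1-P))^{m}$ and taking the limit), and your restriction of the IVT to $[a,1]$ causes no loss for uniqueness since you still invoke \prettyref{lem:f_is_strictly_decreasing} on all of $[0,\infty)$. Both arguments are sound; yours is more self-contained for existence, while the paper's reuses machinery it needs elsewhere anyway.
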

Proofs of these results are given in Appendix \ref{sec:proofs}.
The proof of \prettyref{thm:existence_and_uniqueness} establishes
that $f(0)-0$ and $f(1)-1$ have opposite signs. Therefore, $P^{\star}$
can be computed via the bisection method on the interval $[0,1]$
applied to the map 
\begin{equation}
P\mapsto f(P)-P.\label{eq:root_map}
\end{equation}

\section{Computation}

\subsection{Fixed point iteration}

The literature suggests the use of a fixed point iteration \cite[page 489]{keshav1997engineering}.
This involves picking an initial guess $P_{0}$ for the blocking probability
and considering the iterates of $f$ evaluated at $P_{0}$. Specifically,
\begin{align}
P_{0} & \in\left[0,1\right]\nonumber \\
P_{n} & =f(P_{n-1})\text{ for }n>0.\tag{fixed point iteration}\label{eq:fixed_point_iteration}
\end{align}
We characterize convergence in the following result:
\begin{thm}
\label{thm:fixed_point_convergence_tight}If $\alpha\leq1$ and $|f^{\prime}(0)|<1$,
the \ref{eq:fixed_point_iteration} converges to $P^{\star}$.
\end{thm}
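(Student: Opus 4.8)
The plan is to deduce from the two hypotheses that $f$ is a contraction mapping $[0,1]$ into itself, and then to apply the Banach fixed point theorem, identifying the resulting fixed point with $P^{\star}$ via \prettyref{thm:existence_and_uniqueness}. The substantive facts about $f$ are already contained in \prettyref{lem:f_is_strictly_decreasing} and \prettyref{lem:f_is_convex}, so this proof should be short.

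First I would check that $f$ maps $[0,1]$ into itself, so that the iterates $P_{n}$ never leave that interval. For $P\in(0,1]$ and $\alpha\leq1$ one has $1-\alpha(1-P)=1-\alpha+\alpha P>0$, hence $M(P)>0$, and $f(P)$ is the ratio of $\binom{N-1}{m}(M(P))^{m}$ to $\sum_{X=0}^{m}\binom{N-1}{X}(M(P))^{X}$; the denominator strictly exceeds the numerator because the $X=0$ summand alone equals $1$ and $m\geq1$, so $0<f(P)<1$. At $P=0$ the value $f(0)$, defined through the limit in the \ref{eq:engset_formula}, likewise lies in $[0,1]$ (it equals $1$ precisely when $\alpha=1$).

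The heart of the matter is a uniform bound on $f^{\prime}$. Since $\alpha\leq1$ gives $1-1/\alpha\leq0$, \prettyref{lem:f_is_convex} makes $f$ convex on all of $[0,\infty)$, so $f^{\prime}$ is nondecreasing there; and \prettyref{lem:f_is_strictly_decreasing} forces $f^{\prime}\leq0$ there. Hence $f^{\prime}(0)\leq f^{\prime}(P)\leq0$ for every $P\geq0$, so $|f^{\prime}(P)|\leq|f^{\prime}(0)|$; set $L:=|f^{\prime}(0)|$, which is $<1$ by hypothesis. By the mean value theorem $|f(P)-f(Q)|\leq L|P-Q|$ for all $P,Q\in[0,1]$, so $f$ is a contraction of the complete metric space $[0,1]$ into itself. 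Letting $P^{\star}$ denote its fixed point --- which by \prettyref{thm:existence_and_uniqueness} is the unique probability satisfying the \ref{eq:engset_formula} --- one obtains $|P_{n}-P^{\star}|=|f(P_{n-1})-f(P^{\star})|\leq L|P_{n-1}-P^{\star}|$, whence $|P_{n}-P^{\star}|\leq L^{n}|P_{0}-P^{\star}|\to0$.

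The one point needing care is the meaning of ``$f^{\prime}(0)$'' when $\alpha=1$, for then $1-1/\alpha=0$ and $M$ itself has a pole at the left endpoint. There I would cancel the top power of $M(P)$ and write $f(P)=\binom{N-1}{m}/D(P)$ with $D(P)=\sum_{X=0}^{m}\binom{N-1}{X}\big(1-\alpha(1-P)\big)^{m-X}\alpha^{X-m}$; the polynomial $D$ is at least $\binom{N-1}{m}>0$ on $[0,\infty)$, so $f$ is in fact $C^{\infty}$ up to and including $P=0$, and the convexity and monotonicity estimates --- hence $|f^{\prime}|\leq|f^{\prime}(0)|$ --- hold verbatim at the endpoint. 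I expect this bookkeeping, rather than any genuine obstacle, to be the only thing that needs attention.
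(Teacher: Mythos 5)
Your proposal is correct and follows essentially the same route as the paper's proof: establish $f([0,1])\subset[0,1]$, use \prettyref{lem:f_is_convex} together with \prettyref{lem:f_is_strictly_decreasing} to get $|f^{\prime}|\leq|f^{\prime}(0)|<1$ on $[0,1]$, and conclude by the contraction mapping principle. The only cosmetic differences are that you verify the self-map property directly from the Engset formula (the paper instead reads it off the polynomial representation of $1/f$ and the coefficient $d_{0}\geq1$) and that you explicitly address the removable singularity of $M$ at $P=0$ when $\alpha=1$, which the paper handles implicitly through \prettyref{lem:f_is_a_hypergeometric_reciprocal}.
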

While the first inequality appearing above is a restriction on the
per-source traffic, the second inequality is hard to verify, as it
involves the derivative of $f$. This inspires the following:
\begin{cor}
\label{cor:fixed_point_convergence}If $\alpha\leq1$ and $N\geq2m$,
the \ref{eq:fixed_point_iteration} converges to $P^{\star}$.
\end{cor}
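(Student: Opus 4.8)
The plan is to derive the claimed convergence of the \ref{eq:fixed_point_iteration} from \prettyref{thm:fixed_point_convergence_tight}. Since $\alpha\le 1$ is already among the hypotheses, it suffices to show that the condition $N\ge 2m$ forces $|f'(0)|<1$. To compute $f'(0)$, write $f=h/g$ with $h(M)=\binom{N-1}{m}M^{m}$ and $g(M)=\sum_{X=0}^{m}\binom{N-1}{X}M^{X}$, and note that $M(P)=\alpha/(1-\alpha+\alpha P)$ satisfies $M'(P)=-M(P)^{2}$, with $M_{0}:=M(0)=\alpha/(1-\alpha)\in(0,\infty)$ when $\alpha<1$ (the degenerate value $\alpha=1$ is treated at the end). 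The chain and quotient rules, together with the fact that $h'g-hg'\ge 0$, then give
\[
|f'(0)|=M_{0}^{2}\,\frac{h'(M_{0})g(M_{0})-h(M_{0})g'(M_{0})}{g(M_{0})^{2}}=\frac{\binom{N-1}{m}M_{0}^{m+1}\sum_{X=0}^{m-1}(m-X)\binom{N-1}{X}M_{0}^{X}}{g(M_{0})^{2}},
\]
so that $|f'(0)|<1$ is equivalent to the polynomial inequality
\[
\binom{N-1}{m}M^{m+1}\sum_{X=0}^{m-1}(m-X)\binom{N-1}{X}M^{X}<\Bigl(\sum_{X=0}^{m}\binom{N-1}{X}M^{X}\Bigr)^{2},\qquad M:=M_{0},
\]
which I would in fact prove for \emph{every} $M>0$ under the assumption $N\ge 2m$.

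I would establish this inequality by comparing coefficients of $M^{k}$ on the two sides. The right-hand side is $\sum_{k=0}^{2m}c_{k}M^{k}$ with $c_{k}=\sum\binom{N-1}{X}\binom{N-1}{k-X}$ summed over $0\le X\le m$, $0\le k-X\le m$; the left-hand side is $\sum_{j=0}^{m-1}(m-j)\binom{N-1}{m}\binom{N-1}{j}M^{m+1+j}$. For $k\le m$ only the right-hand side contributes, nonnegatively, and its constant term is $c_{0}=1>0$; for $k>2m$ both sides vanish. Hence the whole inequality reduces to showing, for each $j\in\{0,\dots,m-1\}$ (the coefficient of $M^{m+1+j}$), that
\[
(m-j)\binom{N-1}{m}\binom{N-1}{j}\le\sum_{i=0}^{m-j-1}\binom{N-1}{j+1+i}\binom{N-1}{m-i},
\]
where I have reindexed the $m-j$ surviving summands of $c_{m+1+j}$ (those with $X=j+1,\dots,m$).

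It then remains to bound each of the $m-j$ terms on the right from below by $\binom{N-1}{m}\binom{N-1}{j}$. In the summand $\binom{N-1}{j+1+i}\binom{N-1}{m-i}$ the two lower indices sum to $s:=m+j+1$ and both lie in $\{j+1,\dots,m\}$. Using log-concavity of $k\mapsto\binom{N-1}{k}$, the map $x\mapsto\log\binom{N-1}{x}+\log\binom{N-1}{s-x}$ is discretely concave and symmetric about $x=s/2$, hence unimodal on $\{j+1,\dots,m\}$ and minimized at an endpoint; since $s-(j+1)=m$ and $s-m=j+1$, both endpoints evaluate to $\binom{N-1}{m}\binom{N-1}{j+1}$. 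Thus every summand is at least $\binom{N-1}{m}\binom{N-1}{j+1}$, and $\binom{N-1}{j+1}\ge\binom{N-1}{j}$ because $2(j+1)\le 2m\le N$; summing the $m-j$ terms gives the claim. Together with the strict surplus $c_{0}=1>0$, the polynomial inequality is strict for all $M>0$, so $|f'(0)|<1$ and \prettyref{thm:fixed_point_convergence_tight} yields the corollary whenever $\alpha<1$. For $\alpha=1$ one has $M_{0}=+\infty$ and $|f'(0)|=\lim_{M\to\infty}(\cdots)=m/(N-m)$; this is $<1$ when $N>2m$, and the lone remaining case $\alpha=1$, $N=2m$ still follows from the argument underlying \prettyref{thm:fixed_point_convergence_tight}: there $1-1/\alpha=0$, so by \prettyref{lem:f_is_strictly_decreasing} and \prettyref{lem:f_is_convex} the map $f$ is strictly decreasing and convex on $[0,\infty)$, whence $|f'|\le 1$ on $[0,\infty)$ with strict inequality away from $0$, which already precludes $2$-cycles.

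The main obstacle is the binomial inequality of the second paragraph. Its mild subtlety is that the lower indices on its two sides sum to $m+j+1$ and $m+j$ respectively -- an off-by-one mismatch -- which is exactly why one first passes from the pair $(m,j)$ to $(m,j+1)$, at the cost of the hypothesis $N\ge 2m$, and only afterwards invokes log-concavity for the fixed-sum product. Any attempt to shortcut this (for instance by the coarser bound $\binom{N-1}{m}M^{m+1}\le M g(M)$) loses too much and fails already for $m\ge 3$, so the coefficientwise comparison with the full expansion of $g(M)^{2}$ seems essential.
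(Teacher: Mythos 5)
Your proposal is correct, but it reaches the key estimate $|f^{\prime}(0)|<1$ by a genuinely different and more self-contained route than the paper. The paper simply recycles the machinery already built for \prettyref{lem:f_is_convex}: since $Q\mapsto A(Q)/B(Q)$ is strictly decreasing on $[0,\infty)$ with value $1$ at $Q=0$, evaluating \eqref{eq:f_prime} at $P=0$ gives $|f^{\prime}(0)|<m/(N-m)\leq1$ in two lines when $\alpha<1$; for $\alpha=1$ it notes that every iterate after the first lies in $[f(1),1]$ with $f(1)>0$, so the contraction condition of \prettyref{thm:fixed_point_convergence_tight} need only be checked at $f(1)$, where the same strict monotonicity applies. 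You instead prove the polynomial inequality $\binom{N-1}{m}M^{m+1}\sum_{j=0}^{m-1}(m-j)\binom{N-1}{j}M^{j}<\bigl(\sum_{X=0}^{m}\binom{N-1}{X}M^{X}\bigr)^{2}$ for all $M>0$ by a coefficientwise comparison resting on log-concavity of $k\mapsto\binom{N-1}{k}$ together with the step $\binom{N-1}{j+1}\geq\binom{N-1}{j}$, which is where $N\geq2m$ enters; this is an elementary, hand-proved instance of exactly the Tur\'an-type phenomenon the paper outsources to \prettyref{lem:polynomial_quotient_decreasing}, and it buys the slightly stronger conclusion that $|f^{\prime}(P)|<1$ at every $P$ with $M(P)\in(0,\infty)$, not just at $P=0$. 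Your computations check out, including the limit $m/(N-m)$ as $M\to\infty$. The one place you are thinner than you should be is the endpoint case $\alpha=1$, $N=2m$: ``precludes $2$-cycles'' does yield convergence, but only via the unstated facts that $f$ maps $[0,1]$ into itself and that for a continuous decreasing self-map of a compact interval the even and odd subsequences are monotone and converge to fixed points of $f^{2}$; moreover, strictness of $|f^{\prime}|<1$ away from $0$ requires the strict decrease of $A/B$ from the \emph{proof} of \prettyref{lem:f_is_convex}, not convexity alone. The paper's $[f(1),1]$ device disposes of all of $\alpha=1$ at once and is worth adopting.
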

The condition $N\geq2m$ requires there to be twice as many sources
as there are servers, satisfied in most (but not all) reasonable queueing
systems.

Proofs of these results are given in Appendix \ref{sec:proofs}.

\subsection{Newton's method}

\emph{Newton's method} uses first-derivative information in an attempt
to speed up convergence. In particular, 
\begin{align}
P_{0} & \in\left[0,1\right]\nonumber \\
P_{n} & =P_{n-1}-\frac{f(P_{n-1})-P_{n-1}}{f^{\prime}(P_{n-1})-1}\text{ for }n>0.\tag{Newton's method}\label{eq:newtons_method}
\end{align}
 Often, convergence results for applications of Newton's method are
\emph{local} in nature: they depend upon the choice of initial guess
$P_{0}$. By using the convexity established in \prettyref{lem:f_is_convex},
we are able to derive a \emph{global} result for Newton's method:
\begin{thm}
\label{thm:newtons_method}If $\alpha\leq1$, \ref{eq:newtons_method}
converges to $P^{\star}$.
\end{thm}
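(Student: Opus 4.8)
The plan is to exploit the classical fact that Newton's method, applied to a function that is convex, decreasing, and has a root, converges monotonically from above whenever it is started at or above the root. Concretely, set $g(P) = f(P) - P$, the map in \eqref{eq:root_map}, whose unique zero on $[0,\infty)$ is $P^\star$ by Theorem \ref{thm:existence_and_uniqueness}. By Lemma \ref{lem:f_is_strictly_decreasing}, $f' < 0$ on $[0,\infty)$, so $g' = f' - 1 < 0$ there; by Lemma \ref{lem:f_is_convex}, $g$ is convex on $[1-1/\alpha,\infty)$, and since $\alpha \le 1$ we have $1 - 1/\alpha \le 0$, so $g$ is convex on all of $[0,\infty)$. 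The Newton iteration \eqref{eq:newtons_method} is exactly $P_n = P_{n-1} - g(P_{n-1})/g'(P_{n-1})$.

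First I would show that if $P_0 \ge P^\star$ (in fact $P_0 \in [0,1]$ must be handled too — see below), then the sequence stays in $[P^\star, \infty)$ and is nonincreasing. The standard argument: for any $P \ge P^\star$, convexity gives $g(P^\star) \ge g(P) + g'(P)(P^\star - P)$; since $g(P^\star) = 0$ and $g'(P) < 0$, rearranging yields $P^\star \le P - g(P)/g'(P) = $ (the next Newton iterate). So the iterates never drop below $P^\star$. Monotonicity: for $P \ge P^\star$ we have $g(P) \le g(P^\star) = 0$ (as $g$ is decreasing), and $g'(P) < 0$, so $-g(P)/g'(P) \le 0$, i.e. the next iterate does not exceed $P$. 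A bounded monotone sequence converges; call the limit $L \ge P^\star$. Passing to the limit in the iteration (using continuity of $f$ and $f'$, and $g'(L) < 0$ so no division issue) forces $g(L) = 0$, hence $L = P^\star$ by uniqueness.

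The remaining point is that the theorem allows an arbitrary $P_0 \in [0,1]$, which need not satisfy $P_0 \ge P^\star$. Here I would argue that whenever $P_0 < P^\star$, the very first Newton step already lands at or above $P^\star$, after which the monotone analysis above applies from $P_1$ onward. Indeed, for $0 \le P < P^\star$ convexity of $g$ on $[0,\infty)$ again gives $g(P^\star) \ge g(P) + g'(P)(P^\star - P)$, i.e. $0 \ge g(P) + g'(P)(P^\star - P)$; dividing by $g'(P) < 0$ flips the inequality to $0 \le g(P)/g'(P) + (P^\star - P)$, that is $P - g(P)/g'(P) \ge P^\star$. So $P_1 \ge P^\star$ regardless of the sign of $P_0 - P^\star$, and one must only check that $g'(P_0) \ne 0$, which holds since $g' < 0$ everywhere on $[0,\infty)$, the interval containing $[0,1]$.

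The main obstacle I anticipate is bookkeeping at the boundary of the convexity interval and making sure every Newton iterate (including $P_1$ produced from a possibly sub-$P^\star$ start) stays inside $[0,\infty)$ where Lemmas \ref{lem:f_is_strictly_decreasing}--\ref{lem:f_is_convex} are in force — in particular ruling out that an iterate overshoots to a negative value where $f$ may be badly behaved (the removable/nonremovable discontinuity issue flagged in the remarks). The hypothesis $\alpha \le 1$ is what buys us $1 - 1/\alpha \le 0$ so that $[0,\infty)$ sits inside the convexity region; I would double-check that $P_1 \ge P^\star \ge 0$ indeed holds so that the entire tail of the sequence lives in $[0,\infty)$ and none of the convexity/monotonicity inequalities are applied outside their domain of validity.
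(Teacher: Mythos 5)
There is a genuine error: you have the monotonicity of the Newton iteration backwards. For a \emph{decreasing} convex function $g$, the iterates converge to the root \emph{from below}, not from above. Your rearrangement is where it goes wrong: from $0 = g(P^\star) \ge g(P) + g'(P)(P^\star - P)$, dividing by $g'(P) < 0$ gives $0 \le g(P)/g'(P) + (P^\star - P)$, hence $P - g(P)/g'(P) \le P^\star$ --- the opposite of what you claim in both places you run this computation. Geometrically this is forced: the next iterate is the zero of the tangent line at the current point, convexity puts $g$ above its tangents, so $g(P_1) \ge 0$, and since $g$ is decreasing with $g(P^\star)=0$ this means $P_1 \le P^\star$. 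A concrete check: $g(x) = e^{-x} - \tfrac12$ has root $\ln 2 \approx 0.693$, yet from $x_0 = 0$ Newton gives $x_1 = \tfrac12 < \ln 2$, and from $x_0 = 2$ it gives $x_1 \approx -0.7$, violating both your claimed invariant $P_n \ge P^\star$ and your claimed nonincreasing behaviour from above. Consequently the "main obstacle" you flag --- an iterate overshooting to a negative value --- is real and is not resolved by your (false) bound $P_1 \ge P^\star \ge 0$.

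The architecture is salvageable and is in fact the paper's: after at most one step the iterates lie in $[0, P^\star]$ where $g \ge 0$, and from there they increase monotonically to $P^\star$ (this is Lemma \ref{lem:newtons_method_for_convex_functions}, whose conclusion is explicitly "converges from below"). The one piece you are missing, and which cannot be obtained from convexity alone, is why $P_1 \ge 0$ when $g(P_0) < 0$. The paper gets this from the explicit formula
\[
P_{1}=\frac{f(P_{0})+P_{0}\left|f^{\prime}(P_{0})\right|}{1+\left|f^{\prime}(P_{0})\right|}>0,
\]
which uses the positivity of $f$ on $[0,1]$ (from \eqref{eq:reciprocal_of_f_as_polynomial}), an ingredient absent from your argument. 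With the inequalities flipped and this positivity step added, your limit-passing argument at the end goes through; as written, the proof does not.
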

A proof of this result is given in Appendix \ref{sec:proofs}. Superficially,
\prettyref{thm:newtons_method} seems preferable to \prettyref{cor:fixed_point_convergence}
as it does not place restrictions on $N$ or $m$. In practice, we
will see that \ref{eq:newtons_method} outperforms the \ref{eq:fixed_point_iteration},
and that it performs well even when $\alpha>1$ (Section \ref{sec:numerical_results}).

\section{\label{sec:numerical_results}Comparison of methods}

\prettyref{tab:table_arxiv} compares methods  for a queueing
system with $N=20$ sources (though we mention that the observed trends
seem to hold independent of our choice of $N$). The initial guess
used is $P_{0}=\nicefrac{1}{2}$. The stopping criterion used is $|P_{n+1}-P_{n}|\leq tol=2^{-24}$.

\begin{table}
\begin{centering}
\subfloat[$\alpha=\nicefrac{1}{4}$\label{tab:results_point_25-1}]{\begin{centering}
{\scriptsize%
\begin{tabular}{cccc}
\toprule 
Servers & Probability & \multicolumn{2}{c}{Number of iterations}\tabularnewline
\cmidrule{3-4} 
$m$ & $P^{\star}$ & Fixed point & Newton\tabularnewline
\midrule
1 & 8.322e-01 & 6 & 3\tabularnewline
2 & 6.725e-01 & 7 & 3\tabularnewline
3 & 5.235e-01 & 7 & 3\tabularnewline
4 & 3.879e-01 & 8 & 3\tabularnewline
5 & 2.693e-01 & 9 & 3\tabularnewline
6 & 1.714e-01 & 8 & 4\tabularnewline
7 & 9.718e-02 & 8 & 4\tabularnewline
8 & 4.753e-02 & 7 & 4\tabularnewline
9 & 1.947e-02 & 6 & 4\tabularnewline
10 & 6.554e-03 & 5 & 3\tabularnewline
11 & 1.798e-03 & 4 & 3\tabularnewline
12 & 4.005e-04 & 4 & 3\tabularnewline
13 & 7.194e-05 & 3 & 3\tabularnewline
14 & 1.028e-05 & 3 & 3\tabularnewline
15 & 1.142e-06 & 3 & 3\tabularnewline
16 & 9.518e-08 & 3 & 2\tabularnewline
17 & 5.599e-09 & 2 & 2\tabularnewline
18 & 2.074e-10 & 2 & 2\tabularnewline
19 & 3.638e-12 & 2 & 2\tabularnewline
\bottomrule
\end{tabular}}
\par\end{centering}

}\subfloat[$\alpha=\nicefrac{1}{2}$\label{tab:results_point_2-1}]{\begin{centering}
{\scriptsize%
\begin{tabular}{cccc}
\toprule 
Servers & Probability & \multicolumn{2}{c}{Number of iterations}\tabularnewline
\cmidrule{3-4} 
$m$ & $P^{\star}$ & Fixed point & Newton\tabularnewline
\midrule
1 & 9.087e-01 & 7 &  3\tabularnewline
2 & 8.187e-01 & 8 &  3\tabularnewline
3 & 7.303e-01 & 9 &  3\tabularnewline
4 & 6.436e-01 & 10 &  3\tabularnewline
5 & 5.591e-01 & 11 &  3\tabularnewline
6 & 4.773e-01 & 11 &  3\tabularnewline
7 & 3.985e-01 & 14 &  3\tabularnewline
8 & 3.235e-01 & 15 &  4\tabularnewline
9 & 2.531e-01 & 16 &  4\tabularnewline
10 & 1.885e-01 & 16 &  4\tabularnewline
11 & 1.310e-01 & 14 &  4\tabularnewline
12 & 8.259e-02 & 12 &  4\tabularnewline
13 & 4.527e-02 & 10 &  4\tabularnewline
14 & 2.041e-02 & 8 &  4\tabularnewline
15 & 7.124e-03 & 6 &  4\tabularnewline
16 & 1.827e-03 & 5 &  4\tabularnewline
17 & 3.254e-04 & 4 &  3\tabularnewline
18 & 3.623e-05 & 3 &  3\tabularnewline
19 & 1.907e-06 & 3 &  3\tabularnewline
\bottomrule
\end{tabular}}
\par\end{centering}

}
\par\end{centering}

\begin{centering}
\subfloat[$\alpha=1$\label{tab:results_1-1}]{\begin{centering}
{\scriptsize%
\begin{tabular}{cccc}
\toprule 
Servers & Probability & \multicolumn{2}{c}{Number of iterations}\tabularnewline
\cmidrule{3-4} 
$m$ & $P^{\star}$ & Fixed point & Newton\tabularnewline
\midrule
1 & 9.523e-01 & 7 & 3\tabularnewline
2 & 9.047e-01 & 8 & 3\tabularnewline
3 & 8.574e-01 & 10 & 3\tabularnewline
4 & 8.102e-01 & 12 & 3\tabularnewline
5 & 7.633e-01 & 14 & 4\tabularnewline
6 & 7.166e-01 & 17 & 4\tabularnewline
7 & 6.702e-01 & 20 & 4\tabularnewline
8 & 6.241e-01 & 25 & 4\tabularnewline
9 & 5.782e-01 & 33 & 4\tabularnewline
10 & 5.327e-01 & 45 & 3\tabularnewline
11 & 4.874e-01 & 79 & 3\tabularnewline
12 & 4.424e-01 & 556 & 4\tabularnewline
13 & 3.976e-01 & \texttt{FAIL} & 4\tabularnewline
14 & 3.530e-01 & \texttt{FAIL} & 4\tabularnewline
15 & 3.084e-01 & \texttt{FAIL} & 5\tabularnewline
16 & 2.636e-01 & \texttt{FAIL} & 5\tabularnewline
17 & 2.181e-01 & \texttt{FAIL} & 6\tabularnewline
18 & 1.708e-01 & \texttt{FAIL} & 7\tabularnewline
19 & 1.187e-01 & \texttt{FAIL} & 7\tabularnewline
\bottomrule
\end{tabular}}
\par\end{centering}

}\subfloat[$\alpha=2$\label{tab:results_2-1}]{\begin{centering}
{\scriptsize%
\begin{tabular}{cccc}
\toprule 
Servers & Probability & \multicolumn{2}{c}{Number of iterations}\tabularnewline
\cmidrule{3-4} 
$m$ & $P^{\star}$ & Fixed point & Newton\tabularnewline
\midrule
1 & 9.756e-01 & 7 & 3\tabularnewline
2 & 9.512e-01 & 9 & 3\tabularnewline
3 & 9.268e-01 & 10 & 3\tabularnewline
4 & 9.025e-01 & 13 & 4\tabularnewline
5 & 8.781e-01 & 15 & 4\tabularnewline
6 & 8.538e-01 & 19 & 4\tabularnewline
7 & 8.295e-01 & 24 & 4\tabularnewline
8 & 8.053e-01 & 33 & 4\tabularnewline
9 & 7.810e-01 & 54 & 4\tabularnewline
10 & 7.568e-01 & 136 & 4\tabularnewline
11 & 7.325e-01 & \texttt{FAIL} & 4\tabularnewline
12 & 7.083e-01 & \texttt{FAIL} & 4\tabularnewline
13 & 6.840e-01 & \texttt{FAIL} & 4\tabularnewline
14 & 6.597e-01 & \texttt{FAIL} & 4\tabularnewline
15 & 6.353e-01 & \texttt{FAIL} & 4\tabularnewline
16 & 6.107e-01 & \texttt{FAIL} & 4\tabularnewline
17 & 5.859e-01 & \texttt{FAIL} & 5\tabularnewline
18 & 5.604e-01 & \texttt{FAIL} & 5\tabularnewline
19 & 5.336e-01 & \texttt{FAIL} & 5\tabularnewline
\bottomrule
\end{tabular}}
\par\end{centering}

}
\par\end{centering}

\caption{Comparison under $N=20$. \texttt{FAIL} indicates divergence.\label{tab:table_arxiv}}
\end{table}

Bisection halves the search interval at each step, so that the maximum
possible error at the $n$-th iteration is $2^{-n}$. It follows that
to achieve a desired error tolerance $tol$, bisection requires $\lceil-\lg(tol)\rceil=\lceil-\lg(2^{-24})\rceil=24$
iterations independent of the input parameters (for this reason, it
is omitted from the tables). The \ref{eq:fixed_point_iteration} fails
to converge or performs poorly (sometimes taking hundreds of iterations)
precisely when the sufficient conditions of \prettyref{cor:fixed_point_convergence}
are violated. \ref{eq:newtons_method} outperforms both algorithms
by a wide margin, often converging in just a few iterations.

Insight into the poor performance of the \ref{eq:fixed_point_iteration}
is given by \prettyref{cor:fixed_point_upper_bound} of Appendix
\ref{sec:proofs}, which exploits the oscillatory nature of the \ref{eq:fixed_point_iteration}
(see Figure \ref{fig:oscillatory}) to derive successively tighter
upper bounds on the number of iterations required for convergence
up to a desired error tolerance.

\begin{figure}
\begin{centering}
\begin{center}
\subfloat[$m=10$, $N=2m$, $\alpha=1$, and $P_{0}=\nicefrac{1}{2}$: convergent.]{\begin{centering}
\includegraphics[width=2.25in]{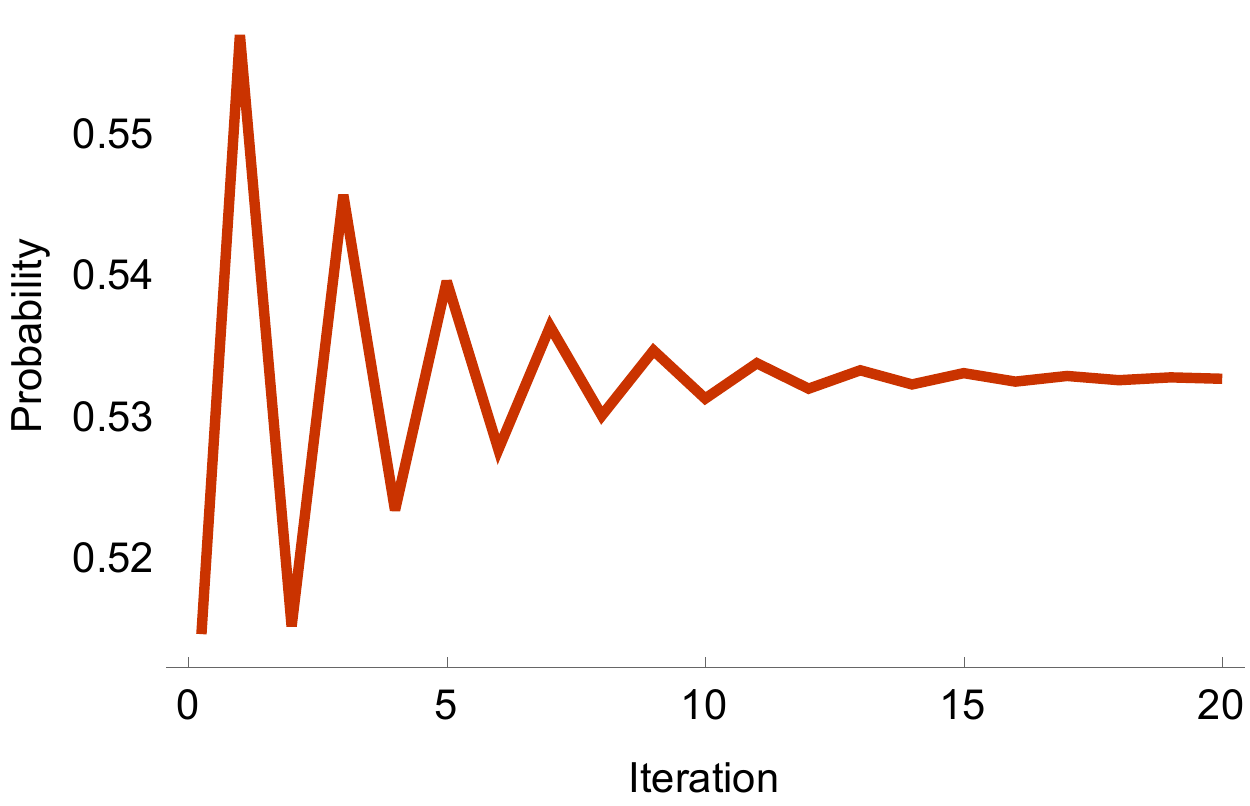}
\par\end{centering}

}\subfloat[$m=15$, $N=20<2m$, $\alpha=1$, and $P_{0}=\nicefrac{1}{2}$: divergent.]{\begin{centering}
\includegraphics[width=2.25in]{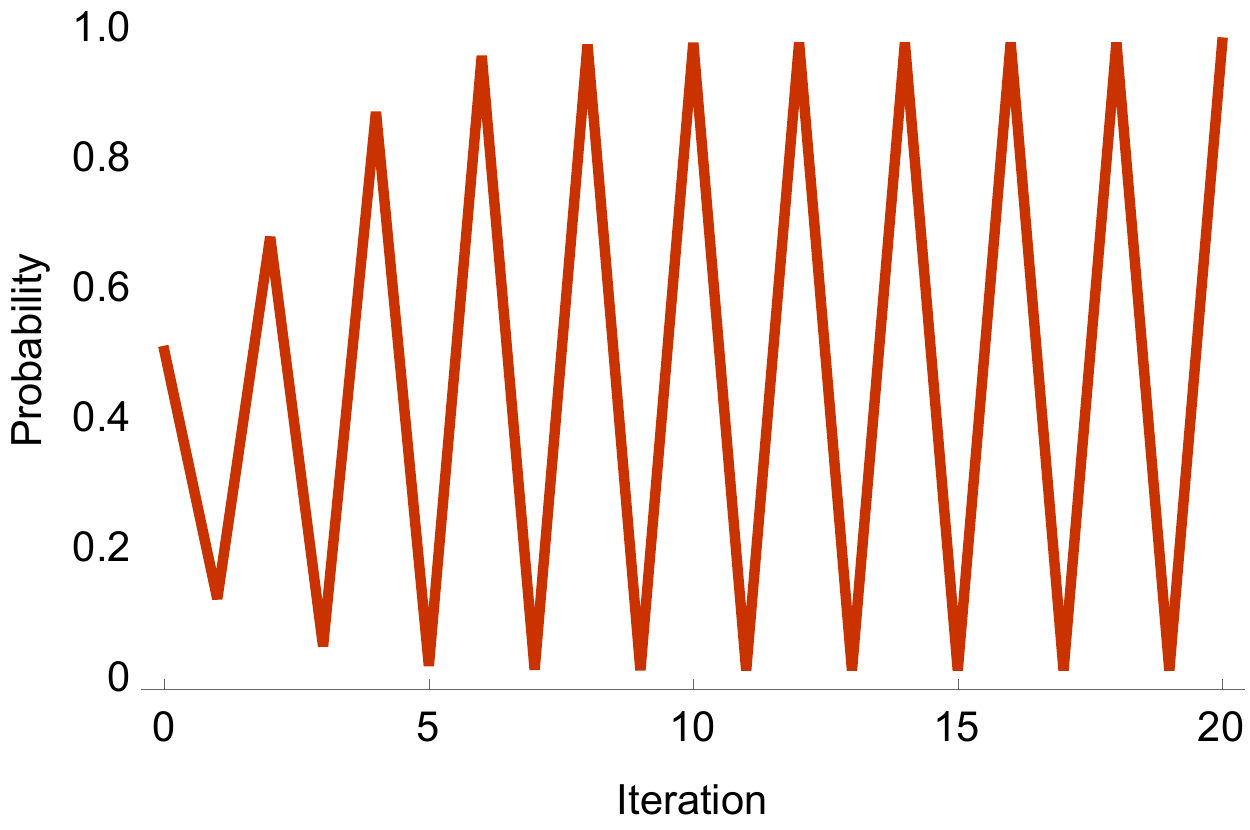}
\par\end{centering}

}
\par\end{center}
\par\end{centering}

\caption{\label{fig:oscillatory}Oscillatory nature of the \ref{eq:fixed_point_iteration}.}
\end{figure}

\begin{rem*}Naïve implementations computing $f$ (and $f^{\prime}$)
directly may take more iterations than necessary due to floating point
error. \prettyref{lem:f_is_a_hypergeometric_reciprocal} of Appendix
\ref{sec:proofs} shows that $f$ is a reciprocal of a hypergeometric
function so that standard computational techniques \cite{pearson2009computation}
can be used. A\emph{ quasi-Newton} implementation has been made available
by the authors: \url{https://github.com/parsiad/fast-engset/releases}.

\end{rem*}

\section{\label{sec:turan}A Turán-type inequality}

Turán-type inequalities are named after Paul Turán, who proved the
result $(L_{n}(x))^{2}>L_{n-1}(x)L_{n+1}(x)$ on $-1<x<1$ for the
Legendre Polynomials $\{L_{n}\}$. Such inequalities appear frequently
for hypergeometric functions and are often a direct consequence of
their log-concavity/convexity. There exists a maturing body of work
characterizing the log-concavity/convexity and associated Turán-type
inequalities of generalized hypergeometric functions (see, e.g., \cite{baricz2008functional,baricz2008turan,karp2010log,kalmykov2013log}).

The analysis used to prove \prettyref{lem:f_is_convex} gives rise
to a new Turán-type inequality. Letting $_{2}F_{1}$ denote the ordinary
hypergeometric function \cite{abramowitz1972handbook}, we have the
following result, whose proof is given in Appendix \ref{sec:proofs}:
\begin{thm}[A Turán-type inequality]
 \label{thm:turan_type_inequality}Let $b$ be a positive integer,
$c$ a positive real number, and
\[
h_{n}(x)={}_{2}F_{1}(1+n,-b+n;c+n;-x).
\]
Then, the map $x\mapsto h_{1}(x)/(h_{0}(x))^{2}$ is strictly decreasing
on $[0,\infty)$ and 
\begin{equation}
b\left(c+1\right)\cdot\left(h_{1}(x)\right)^{2}\geq\left(b-1\right)c\cdot h_{0}(x)h_{2}(x)\text{ for }x\geq0.\label{eq:turan_type_inequality}
\end{equation}

\end{thm}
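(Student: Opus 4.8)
The plan is to reduce both assertions to the single polynomial inequality
\[
\Phi(x):=b(c+1)\bigl(h_1(x)\bigr)^2-(b-1)c\,h_0(x)h_2(x)\ \ge\ 0\qquad(x\ge 0),
\]
and then prove $\Phi\ge0$ by a coefficient comparison. First I would record the differentiation rule $\tfrac{d}{dx}\,{}_2F_1(\alpha,\beta;\gamma;-x)=-\tfrac{\alpha\beta}{\gamma}\,{}_2F_1(\alpha+1,\beta+1;\gamma+1;-x)$, which gives $h_0'=\tfrac bc\,h_1$ and $h_1'=\tfrac{2(b-1)}{c+1}\,h_2$, hence $h_0''=\tfrac{2b(b-1)}{c(c+1)}\,h_2$. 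Because $b$ is a positive integer, the series for $h_0$ and $h_1$ (and for $h_2$ once $b\ge2$) terminate, so these are polynomials in $x$; cancelling $(-x)^j$ against a Pochhammer symbol of a nonpositive integer shows each has \emph{positive} coefficients, so in particular $h_0,h_1>0$ on $[0,\infty)$. The case $b=1$ is immediate ($h_1\equiv1$, $\Phi\equiv c+1>0$, and $h_1/h_0^2=1/h_0^2$ is strictly decreasing since $h_0=1+x/c$), so assume $b\ge2$ henceforth.

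A short computation then gives
\[
\Bigl(\tfrac{h_1}{h_0^{2}}\Bigr)'=\frac{-2\,\Phi}{c(c+1)\,h_0^{3}}\qquad\text{and}\qquad\Phi=\frac{c^{2}(c+1)}{2b}\bigl(2(h_0')^{2}-h_0h_0''\bigr).
\]
Thus $\Phi\ge0$ on $[0,\infty)$ is exactly \eqref{eq:turan_type_inequality}; and since $\Phi$ is a nonzero polynomial ($\Phi(0)=b+c>0$), $(h_1/h_0^2)'<0$ off a finite set, so $h_1/h_0^2$ is strictly decreasing. Everything therefore collapses to $2(h_0')^{2}\ge h_0h_0''$ on $[0,\infty)$, i.e.\ to convexity of $1/h_0$ there --- which is also the analytic heart of \prettyref{lem:f_is_convex}.

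To prove this, write $h_0=\sum_{j=0}^{b}A_jx^{j}$ with $A_j=\tfrac{b!}{(b-j)!\,(c)_j}>0$. Since $A_j/A_{j-1}=\tfrac{b-j+1}{c+j-1}$ is decreasing in $j$, the coefficient sequence $(A_j)$ is (strictly) log-concave. Expanding the product, the coefficient of $x^{N-2}$ in $2(h_0')^{2}-h_0h_0''$ is
\[
\sum_{i=0}^{N}\Bigl(3\,i(N-i)-\tbinom N2\Bigr)A_iA_{N-i}.
\]
Here $i\mapsto 3i(N-i)-\tbinom N2$ is a downward parabola symmetric about $i=N/2$ (hence unimodal, nonnegative exactly on a symmetric middle block) and satisfies $\sum_{i=0}^{N}\bigl(3i(N-i)-\tbinom N2\bigr)=0$; log-concavity of $(A_j)$ makes $i\mapsto\log(A_iA_{N-i})$ concave and symmetric about $i=N/2$, so $i\mapsto A_iA_{N-i}$ is likewise unimodal and peaked there. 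Writing $i_0$ for an index at which the parabola changes sign and subtracting $A_{i_0}A_{N-i_0}$ from $A_iA_{N-i}$ inside the sum, each resulting summand has nonnegative sign (a nonnegative weight multiplies something at least as large as the pivot, a nonpositive weight something at most as large); hence the displayed coefficient is $\ge0$. Carrying this out for every $N$ proves $2(h_0')^{2}-h_0h_0''\ge0$ on $[0,\infty)$, and the theorem follows.

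The one place demanding real care is this last step. The raw bilinear form $\sum_{i+j=N}(2ij-j(j-1))A_iA_j$ is opaque; the trick is that symmetrizing in $i$ and $j$ turns its weights into the transparent $3ij-\binom N2$, after which a telescoping identity together with the two unimodalities --- one purely combinatorial, one supplied by log-concavity of $(A_j)$ --- finish via a standard rearrangement/Chebyshev argument. An alternative I would keep in reserve is the Euler representation $h_0(x)=(c-1)\int_0^1(1-t)^{c-2}(1+xt)^{b}\,dt$ (valid for $c>1$, and extendable to all $c>0$ since both sides are polynomials in $x$ with coefficients rational in $c$), which recasts $2(h_0')^2\ge h_0h_0''$ as a moment inequality for a $\mathrm{Beta}(1,c-1)$ random variable; but there Cauchy--Schwarz points the wrong way, and one needs a reverse-Cauchy--Schwarz bound with the sharp constant $2b/(b-1)$ that is delicate near the endpoint $t=0$. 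I would take the elementary coefficient route.
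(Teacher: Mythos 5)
Your proof is correct, and the key step is handled by a genuinely different argument than the paper's. The paper obtains the monotonicity of $h_{1}/h_{0}^{2}$ by invoking the polynomial-quotient lemma of Karp--Sitnik (Lemma~\ref{lem:polynomial_quotient_decreasing}) with $A=h_{1}$ and $B=h_{0}^{2}$, verifying the cross-ratio condition $a_{X}b_{X-1}\leq a_{X-1}b_{X}$ through a Cauchy-product expansion of $h_{0}^{2}$, Gauss summation, and a parity case analysis; the inequality \eqref{eq:turan_type_inequality} is then read off from the sign of the numerator of $(h_{1}/h_{0}^{2})^{\prime}$ --- exactly your identity $C=-\frac{2}{c(c+1)}\Phi$. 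You run the logic in the opposite direction: everything is reduced to $\Phi\geq0$, which via $h_{0}^{\prime}=\frac{b}{c}h_{1}$ and $h_{0}^{\prime\prime}=\frac{2b(b-1)}{c(c+1)}h_{2}$ is precisely convexity of $1/h_{0}$, and you prove $2(h_{0}^{\prime})^{2}-h_{0}h_{0}^{\prime\prime}\geq0$ coefficientwise by symmetrizing the weights to $3i(N-i)-\binom{N}{2}$, noting they sum to zero, and playing the resulting sign pattern of the downward parabola against the symmetric unimodal sequence $A_{i}A_{N-i}$ (unimodality supplied by log-concavity of $(A_{j})$, since $A_{j}/A_{j-1}=(b-j+1)/(c+j-1)$ decreases) with a pivot at the sign change. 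I checked the pivot step, including the degenerate cases where $A_{i}A_{N-i}$ vanishes outside $[\max(0,N-b),\min(N,b)]$ and the separate treatment of $b=1$ (where $h_{2}$ is not a polynomial but the inequality is trivial); it all goes through. What your route buys: it is self-contained (no external quotient lemma), it visibly works for arbitrary real $c>0$, and it makes explicit that the theorem is equivalent to convexity of $1/h_{0}$ on $[0,\infty)$, i.e., to the analytic content of Lemma~\ref{lem:f_is_convex}. What it shares with the paper: both are ultimately elementary coefficient comparisons, just applied to different polynomials ($2(h_{0}^{\prime})^{2}-h_{0}h_{0}^{\prime\prime}$ versus the cross-ratios of $h_{1}$ and $h_{0}^{2}$), and neither says anything about $x<0$, so neither resolves Conjecture~\ref{conj:f_is_convex}.
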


\section{Future work}

Numerical evidence suggests that \prettyref{lem:f_is_convex} can
be relaxed:
\begin{conjecture}
\label{conj:f_is_convex}$f$ is convex on $[0,\infty)$.
\end{conjecture}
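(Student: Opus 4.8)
The plan is to reduce the conjecture to the coefficient positivity of a single polynomial and then settle that via a variance estimate for log-concave sequences. First note that the conjecture has content only when $\alpha > 1$: for $\alpha \le 1$ we have $1 - 1/\alpha \le 0$, so $[1-1/\alpha,\infty) \supset [0,\infty)$ and convexity on $[0,\infty)$ is already \prettyref{lem:f_is_convex}; for $\alpha > 1$ the only gap is the interval $[0, 1 - 1/\alpha)$. Using the hypergeometric reciprocal representation (\prettyref{lem:f_is_a_hypergeometric_reciprocal}), I would write $f = 1/h_0$, where in the notation of \prettyref{thm:turan_type_inequality} with $b = m$ and $c = N - m$ one has $h_0(x) = {}_2F_1(1, -m; N-m; -x)$ and $x = P - (1 - 1/\alpha)$. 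This substitution is affine with unit slope, so $P \in [0,\infty)$ corresponds to $x \in [1/\alpha - 1, \infty)$, and since $1/\alpha - 1 \in (-1, 0)$ for $\alpha > 1$ it suffices to prove that $1/h_0$ is convex for every $x > -1$. Because $h_0$ is a degree-$m$ polynomial that is strictly positive on $(-1,\infty)$, this is equivalent to
\[
g(x) := 2\left(h_0^\prime(x)\right)^2 - h_0(x) h_0^{\prime\prime}(x) \ge 0 \quad \text{for } x > -1.
\]
The region $x \ge 0$ is exactly \prettyref{thm:turan_type_inequality} (that Tur\'an-type inequality is $g \ge 0$ after clearing positive factors); the genuinely new content is $-1 < x < 0$.

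Next I would shift the origin to the critical endpoint by setting $y = x + 1 \ge 0$. Euler's integral representation gives, for $c > 1$,
\[
h_0(x) = (c-1)\int_0^1 (1-t)^{c-2}\left((1-t) + yt\right)^m \, dt,
\]
so expanding the binomial exhibits $h_0 = \sum_{k=0}^m a_k y^k$ with $a_k > 0$ and $a_{k+1}/a_k = (m-k)/(N-2-k)$. It then suffices to establish the \emph{sufficient} condition that $g$, viewed as a polynomial in $y$, has only nonnegative coefficients, since $y \ge 0$ on the region of interest. Computing $[y^n]g$ directly from $h_0 = \sum a_k y^k$ and symmetrizing the summation indices yields
\[
[y^n]g = \tfrac12 \sum_{i+j=s} a_i a_j \left(6ij - s(s-1)\right), \quad s := n + 2.
\]

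Introducing the symmetric weights $w_i \propto a_i a_{s-i}$ on $\{\max(0,s-m),\ldots,\min(s,m)\}$ and writing $\langle\cdot\rangle$ for the corresponding average, the identity $\langle ij\rangle = \langle i(s-i)\rangle = s^2/4 - \operatorname{Var}(i)$ (using $\langle i\rangle = s/2$ by symmetry) recasts $[y^n]g \ge 0$ as the clean variance bound
\[
\operatorname{Var}_w(i) \le \frac{s(s+2)}{12},
\]
whose right-hand side is precisely the variance of the uniform distribution on $\{0,1,\ldots,s\}$. Thus the whole conjecture collapses to the statement that the symmetric distribution $w$ is no more dispersed than the uniform distribution on $\{0,\ldots,s\}$.

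The hard part will be this variance domination, and it is where integrality of $c = N - m$ becomes essential: the $m = 2$ computation shows that $g$ acquires a root inside $(-1,0)$ for real $c \in (1,2)$, so no argument uniform in real $c$ can succeed. My plan is to exploit log-concavity. The ratio test above shows $(a_k)$ is log-concave exactly when $c \ge 2$, and since a product of log-concave sequences is log-concave, $w_i \propto a_i a_{s-i}$ is then symmetric and log-concave, hence symmetric and unimodal with mode at $s/2$. For such a $w$ I would use a level-set (layer-cake) decomposition, writing $w$ as a mixture of uniform distributions on centered intervals, all sharing the mean $s/2$; variances then add linearly across the mixture and are each bounded by the variance of the widest centered interval, which is contained in $\{0,\ldots,s\}$, giving $\operatorname{Var}_w(i) \le s(s+2)/12$. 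The degenerate cases must be checked separately: $c = 1$ gives $h_0 = (1+x)^m = y^m$ (so $g = (m^2+m)y^{2m-2} \ge 0$ trivially), and $c = 2$ gives constant $a_k$ (so $w$ is itself uniform on a centered subinterval). I expect the linchpin lemma — that a symmetric log-concave integer distribution supported in $\{0,\ldots,s\}$ has variance at most that of the uniform — to require the most care, either through the layer-cake argument above or by appeal to known majorization results for log-concave sequences.
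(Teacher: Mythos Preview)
There is nothing in the paper to compare against: this statement is explicitly a \emph{conjecture}, placed in the ``Future work'' section with only numerical evidence offered. The paper proves convexity only on $[1-1/\alpha,\infty)$ (\prettyref{lem:f_is_convex}) and leaves the extension to $[0,\infty)$ open.

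That said, your outline is not merely a plan; it is essentially a complete and correct proof of the conjecture. Each step checks out. The reduction to $g=2(h_0')^2-h_0h_0''\ge 0$ on $x>-1$ is the second-derivative test for $1/h_0$ with $h_0>0$. The Euler integral (valid for $c=N-m>1$) does give $h_0=\sum_{k}a_ky^k$ with $a_k>0$ and $a_{k+1}/a_k=(m-k)/(N-2-k)$; direct expansion confirms this. The coefficient identity $[y^n]g=\tfrac12\sum_{i+j=s}a_ia_j\bigl(6ij-s(s-1)\bigr)$ with $s=n+2$, and its recasting as $\operatorname{Var}_w(i)\le s(s+2)/12$, are both correct (and hold in small test cases such as $m=2$, $c=2,3$). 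For integer $c\ge 2$ the ratio $(m-k)/(N-2-k)$ is nonincreasing (numerator and denominator positive, difference $2-c\le 0$), so $(a_k)$ is log-concave, and hence so is $w_i\propto a_ia_{s-i}$.

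The step you flag as the ``linchpin'' is in fact routine and needs only unimodality, not log-concavity. A symmetric unimodal probability on a subset of $\{0,\dots,s\}$ decomposes as $w=\sum_{j}c_j\,\mathbf{1}_{[j,\,s-j]}$ with $c_j=w_j-w_{j-1}\ge 0$ for $j$ on the left half ($w_{-1}:=0$); normalizing gives a mixture of uniforms on centered intervals $\{j,\dots,s-j\}$, each with mean $s/2$ and variance $(s-2j)(s-2j+2)/12\le s(s+2)/12$. Since all components share the mean $s/2$, the mixture variance equals the convex combination of component variances, and the bound follows. No majorization machinery is needed. The boundary cases $c=1$ (where $h_0=y^m$ and $g=m(m+1)y^{2m-2}$) and $c=2$ (constant $a_k$, so equality for $s\le m$) behave exactly as you say, and your remark that integrality of $c$ is essential is also correct: for $m=2$ and real $c\in(1,2)$ one finds $[y^0]g=\text{const}\cdot(c-1)(c-2)<0$.

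In short, the paper supplies no proof, and you have one.
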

This result would remove the requirement $\alpha\leq1$ from all claims
in this work. In particular, this would yield unconditional convergence
for \ref{eq:newtons_method}.

\appendix

\section{\label{sec:proofs}Proofs of results}

Let $(\cdot)_{X}$ denote the\emph{ Pochhammer symbol}:
\begin{equation}
\left(c\right)_{X}=\begin{cases}
c\left(c+1\right)\cdots\left(c+X-1\right), & \text{if }X\text{ is a positive integer};\\
1, & \text{if }X=0.
\end{cases}\tag{Pochhammer symbol}\label{eq:pochhammer_symbol}
\end{equation}
The ordinary hypergeometric function \cite{abramowitz1972handbook}
satisfies 
\begin{equation}
_{2}F_{1}(a,b;c;z)=\sum_{X=0}^{\infty}\frac{\left(a\right)_{X}\left(b\right)_{X}}{\left(c\right)_{X}}\frac{z^{X}}{X!}\text{ if }b\in\left\{ -1,-2,\ldots\right\} \text{ or }\left|z\right|<1\tag{hypergeometric function}.\label{eq:hypergeometric_function}
\end{equation}
 The \ref{eq:pochhammer_symbol} can also be used to represent the
\emph{falling factorial} $c^{(X)}$:
\begin{multline*}
c^{\left(X\right)}=c\left(c-1\right)\cdots\left(c-X+1\right)\\
=\left(-c\right)\left(-1\right)\left(-c+1\right)\left(-1\right)\cdots\left(-c+X-1\right)\left(-1\right)=\left(-c\right)_{X}\left(-1\right)^{X}.
\end{multline*}

\begin{lem}
\label{lem:f_is_a_hypergeometric_reciprocal}$f(P)$ defined in the
\ref{eq:engset_formula} satisfies 
\[
1/f(P)={}_{2}F_{1}(1,-m;N-m;1-P-1/\alpha).
\]
\end{lem}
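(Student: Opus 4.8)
The plan is to compute $1/f(P)$ directly from the \ref{eq:engset_formula} and recognize the resulting finite sum as a terminating Gauss series. Ignoring the limit for now and writing $M=M(P)$, divide numerator and denominator to get
\[
1/f(P)=\frac{1}{\binom{N-1}{m}M^{m}}\sum_{X=0}^{m}\binom{N-1}{X}M^{X}=\sum_{X=0}^{m}\frac{\binom{N-1}{X}}{\binom{N-1}{m}}M^{X-m}.
\]
Reindexing by $Y=m-X$ puts this in the form $\sum_{Y=0}^{m}\bigl(\binom{N-1}{m-Y}/\binom{N-1}{m}\bigr)M^{-Y}$, i.e.\ a power series in $-1/M$ whose index starts at zero.

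Next I would simplify the binomial ratio by expanding factorials:
\[
\frac{\binom{N-1}{m-Y}}{\binom{N-1}{m}}=\frac{m!}{(m-Y)!}\cdot\frac{(N-1-m)!}{(N-1-m+Y)!}=m^{(Y)}\cdot\frac{1}{(N-m)_{Y}},
\]
where $m^{(Y)}$ is the falling factorial of the appendix. Applying the identity $c^{(X)}=(-c)_{X}(-1)^{X}$ with $c=m$ converts $m^{(Y)}$ into $(-m)_{Y}(-1)^{Y}$, so
\[
1/f(P)=\sum_{Y=0}^{m}\frac{(-m)_{Y}}{(N-m)_{Y}}\left(-\frac{1}{M}\right)^{Y}.
\]
Since $M(P)=\alpha/(1-\alpha(1-P))$ we have $-1/M(P)=1-P-1/\alpha$, and since $(1)_{Y}=Y!$ each summand equals $\frac{(1)_{Y}(-m)_{Y}}{(N-m)_{Y}}\frac{(1-P-1/\alpha)^{Y}}{Y!}$. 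As $-m$ is a negative integer the series ${}_{2}F_{1}(1,-m;N-m;\cdot)$ terminates after the $Y=m$ term, so the displayed sum is exactly ${}_{2}F_{1}(1,-m;N-m;1-P-1/\alpha)$.

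Finally, the limit $\lim_{P^{\prime}\to P}$ in the \ref{eq:engset_formula} needs a word: it is there only to tame the removable singularity of $M$ at $P=1-1/\alpha$. The computation above is literally valid for every $P^{\prime}$ at which $M(P^{\prime})$ is finite, and its output ${}_{2}F_{1}(1,-m;N-m;1-P^{\prime}-1/\alpha)$ is a polynomial in $P^{\prime}$, hence continuous; so passing to the limit changes nothing (at $P=1-1/\alpha$ the value is the hypergeometric function evaluated at argument $0$, namely $1$). I do not expect a real obstacle here — the argument is pure bookkeeping with Pochhammer symbols — and the only place demanding care is getting the reindexing and the sign in the argument $1-P-1/\alpha$ right.
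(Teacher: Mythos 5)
Your proof is correct and follows essentially the same route as the paper's: rewrite the binomial ratios as Pochhammer symbols, convert the falling factorial via $m^{(Y)}=(-m)_{Y}(-1)^{Y}$, substitute $-1/M(P)=1-P-1/\alpha$, and recognize the terminating Gauss series. Your explicit continuity argument for the limit at $P=1-1/\alpha$ is a slightly more careful version of the paper's ``the claim is trivial'' remark, and the only blemish is the passing description of the reindexed sum as a series in $-1/M$ when at that stage it is written in powers of $1/M$ (the sign is correctly absorbed later).
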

\begin{proof}
If $P=1-1/\alpha$, the claim is trivial. Otherwise, the reciprocal
of $M(P)$ in the \ref{eq:engset_formula} is
\begin{equation}
1/M(P)=-\left(1-P-1/\alpha\right).\label{eq:reciprocal_of_M}
\end{equation}
 We can write the binomial coefficients in the \ref{eq:engset_formula}
in terms of \ref{eq:pochhammer_symbol}s as follows: 
\begin{equation}
\binom{N-1}{X}/\binom{N-1}{m}=\frac{m!}{X!}\frac{\left(N-1-m\right)!}{\left(N-1-\phantom{m}\mathllap{X}\right)!}=\frac{m^{\left(m-X\right)}}{\left(N-m\right)_{m-X}}.\label{eq:binomial_to_pochhammer}
\end{equation}
Substituting \eqref{eq:reciprocal_of_M} and \eqref{eq:binomial_to_pochhammer}
into the reciprocal of $f(P)$ yields
\begin{multline*}
\frac{1}{f(P)}=\sum_{X=0}^{m}\frac{m^{\left(m-X\right)}}{\left(N-m\right)_{m-X}}\left(1/M(P)\right)^{m-X}=\sum_{X=0}^{m}\frac{m^{\left(X\right)}}{\left(N-m\right)_{X}}\left(1/M(P)\right)^{X}\\
=\sum_{X=0}^{m}\frac{\left(-m\right)_{X}}{\left(N-m\right)_{X}}\left(1-P-1/\alpha\right)^{X}=\sum_{X=0}^{\infty}\frac{\left(-m\right)_{X}}{\left(N-m\right)_{X}}\left(1-P-1/\alpha\right)^{X}.
\end{multline*}
The upper bound of summation is relaxed to $\infty$ in the last
equality since $(-m)_{X}=0$ if $X>m$. The desired result then follows
from multiplying each summand in the series by $(1)_{X}/X!=1$.
\end{proof}
The following identity should be understood subject to the convention
$0=0\cdot\infty=\infty\cdot0$ ($\infty$ denotes complex infinity):
\begin{lem}[Hypergeometric binomial theorem]
\label{lem:hypergeometric_binomial}Suppose $b$ is a negative integer
and $c$ is not an integer satisfying $b\leq c\leq0$. Then, 
\[
_{2}F_{1}(a,b;c;z+w)=\sum_{Y=0}^{\infty}\frac{\left(a\right)_{Y}\left(b\right)_{Y}}{\left(c\right)_{Y}}\frac{z^{Y}}{Y!}{}_{2}F_{1}(a+Y,b+Y;c+Y;w).
\]
\end{lem}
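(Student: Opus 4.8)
The plan is to exploit that, because $b$ is a negative integer, both sides are \emph{finite} sums, so the whole argument reduces to rearranging a polynomial identity and no question of convergence or of interchanging infinite sums arises. Writing $B=-b>0$, the series \eqref{eq:hypergeometric_function} for the left-hand side terminates at $X=B$ since $(b)_X=0$ for $X>B$. I would begin from
\[
{}_2F_1(a,b;c;z+w)=\sum_{X=0}^{B}\frac{(a)_X(b)_X}{(c)_X}\frac{(z+w)^X}{X!}
\]
and expand each power with the ordinary binomial theorem, $(z+w)^X=\sum_{Y=0}^{X}\tfrac{X!}{Y!\,(X-Y)!}\,z^Y w^{X-Y}$, cancelling the $X!$ against the $1/X!$ already present.

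Next I would swap the two (finite) sums and set $Z=X-Y$, so the double sum runs over $Y,Z\ge0$ with $Y+Z\le B$. The algebraic heart of the proof is the elementary splitting $(\,\cdot\,)_{Y+Z}=(\,\cdot\,)_Y\,(\,\cdot+Y)_Z$, immediate from \eqref{eq:pochhammer_symbol}, applied to each of $a$, $b$, and $c$:
\[
\frac{(a)_{Y+Z}(b)_{Y+Z}}{(c)_{Y+Z}}=\frac{(a)_Y(b)_Y}{(c)_Y}\cdot\frac{(a+Y)_Z(b+Y)_Z}{(c+Y)_Z}.
\]
Factoring the $Y$-dependent piece out of the inner sum and recognizing $\sum_{Z}\tfrac{(a+Y)_Z(b+Y)_Z}{(c+Y)_Z}\tfrac{w^Z}{Z!}$ as $_2F_1(a+Y,b+Y;c+Y;w)$ — itself terminating, since $b+Y$ is a nonpositive integer for $0\le Y\le B$ — reproduces the claimed right-hand side exactly.

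The main obstacle is the bookkeeping around the Pochhammer symbols carrying $c$, which is precisely where the hypothesis that $c$ is not an integer in $[b,0]$ and the stated convention $0=0\cdot\infty=\infty\cdot0$ enter. I would verify that, over the relevant support $0\le Y\le B$ and $0\le Z\le B-Y$, this hypothesis forces $(c)_Y$, $(c+Y)_Z$, and $(c)_{Y+Z}$ all to be nonzero, so that the splitting is a genuine equality of nonzero reals rather than an indeterminate $0/0$ or $\infty$. The only delicate case is $c$ a negative integer with $c<b$: here $c+Y$ is a negative integer, but a short index count shows the first vanishing of $(c+Y)_Z$ occurs at an index $Z>B-Y$, beyond the support, so every inner $_2F_1$ remains a well-defined finite polynomial; the analogous counts give $(c)_Y\ne0$ and $(c)_{Y+Z}\ne0$ since $|c|>B$. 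Once these nonvanishing facts are in hand the rearrangement is purely formal, and the convention is invoked only to interpret degenerate factor-level products, completing the proof.
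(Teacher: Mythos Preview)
Your proposal is correct and follows essentially the same route as the paper: expand the series for ${}_2F_1(a,b;c;z+w)$, apply the ordinary binomial theorem to $(z+w)^X$, interchange the two summations, and use the Pochhammer splitting $(\,\cdot\,)_{Y+Z}=(\,\cdot\,)_Y(\,\cdot+Y)_Z$ to factor out the $Y$-piece and recognize the inner sum as ${}_2F_1(a+Y,b+Y;c+Y;w)$. The only difference is one of presentation: the paper writes the sums with upper limit $\infty$ throughout and leaves the role of the hypothesis on $c$ and the $0=0\cdot\infty$ convention implicit, whereas you make the termination at $B=-b$ explicit and carry out the index count showing $(c)_Y$, $(c+Y)_Z$, and $(c)_{Y+Z}$ are nonzero over the support --- a welcome bit of care that the paper omits.
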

\begin{proof}
An application of the binomial theorem yields
\begin{multline*}
_{2}F_{1}(a,b,c;z+w)=\sum_{X=0}^{\infty}\frac{\left(a\right)_{X}\left(b\right)_{X}}{\left(c\right)_{X}}\frac{\left(z+w\right)^{X}}{X!}\\
=\sum_{X=0}^{\infty}\frac{\left(a\right)_{X}\left(b\right)_{X}}{\left(c\right)_{X}}\frac{1}{X!}\sum_{Y=0}^{X}\binom{X}{Y}z^{Y}w^{X-Y}=\sum_{Y=0}^{\infty}\frac{z^{Y}}{Y!}\sum_{X=Y}^{\infty}\frac{\left(a\right)_{X}\left(b\right)_{X}}{\left(c\right)_{X}}\frac{w^{X-Y}}{\left(X-Y\right)!}\\
=\sum_{Y=0}^{\infty}\frac{\left(a\right)_{Y}\left(b\right)_{Y}}{\left(c\right)_{Y}}\frac{z^{Y}}{Y!}\sum_{X=Y}^{\infty}\frac{\left(a+Y\right)_{X-Y}\left(b+Y\right)_{X-Y}}{\left(c+Y\right)_{X-Y}}\frac{w^{X-Y}}{\left(X-Y\right)!}.
\end{multline*}
The desired result follows by shifting the index of summation to
$X=0$.
\end{proof}
\prettyref{lem:hypergeometric_binomial} can also be extended to the
case where $b$ is not a negative integer, but care must be taken
to ensure that the various power series are convergent.

\begin{proof}[Proof of \prettyref{lem:f_is_strictly_decreasing}]
 To establish this, we show that $P\mapsto1/f(P)$ is a polynomial
with positive coefficients. That is,
\begin{equation}
\frac{1}{f(P)}=\sum_{Y=0}^{m}c_{Y}P^{Y}\text{ where }c_{Y}>0.\label{eq:reciprocal_of_f_as_polynomial}
\end{equation}
An application of \prettyref{lem:hypergeometric_binomial} to the
form in \prettyref{lem:f_is_a_hypergeometric_reciprocal} reveals
that
\begin{equation}
c_{Y}=\frac{m^{\left(Y\right)}}{\left(N-m\right)_{Y}}d_{Y}.\label{eq:reciprocal_of_f_explicit_coefficient_form}
\end{equation}
where $d_{Y}={}_{2}F_{1}(1+Y,-(m-Y);N-m+Y;1-1/\alpha)$. To arrive
at \eqref{eq:reciprocal_of_f_as_polynomial}, it suffices to show
$d_{Y}>0$. Another application of \prettyref{lem:hypergeometric_binomial}
along with the identity
\[
_{2}F_{1}(a,-b;c;1)=\frac{\left(c-a\right)_{b}}{\left(c\right)_{b}}\text{ if }b\text{ is a nonnegative integer}
\]
 yields
\begin{equation}
d_{Y}=\sum_{Z=0}^{m-Y}\frac{\left(1/\alpha\right)^{Z}}{Z!}\frac{\left(1+Y\right)_{Z}\left(m-Y\right)^{\left(Z\right)}}{\left(N-m+Y\right)_{Z}}\frac{\left(N-m-1\right)_{m-Y-Z}}{\left(N-m+Y+Z\right)_{m-Y-Z}},\label{eq:reciprocal_of_f_polynomial_subsubcoefficient}
\end{equation}
which is trivially positive.
\end{proof}

\begin{rem*}A concise proof of $d_{Y}>0$ for the case of $\alpha>1$
($\alpha\leq1$ is trivial) is given by the Euler transform: $_{2}F_{1}(a,b;c;z)=(1-z)^{c-a-b}{}_{2}F_{1}(c-a,c-b;c;z)$.\end{rem*}

The following is found in \cite[Lemma 1]{karp2010log}:
\begin{lem}
\label{lem:polynomial_quotient_decreasing}Let 
\[
A(Q)=\sum_{X=0}^{N}a_{X}Q^{X}\text{ and }B(Q)=\sum_{X=0}^{N}b_{X}Q^{X}
\]
be distinct polynomials with nonnegative coefficients satisfying $a_{X}b_{X-1}\leq a_{X-1}b_{X}$
for $0<X\leq N$ and $b_{X}>0$ for $0\leq X\leq N$. Then, the map
$Q\mapsto A(Q)/B(Q)$ is strictly decreasing on $[0,\infty)$.
\end{lem}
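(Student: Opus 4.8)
The plan is to read off monotonicity from the sign of the derivative of the rational function $R(Q) = A(Q)/B(Q)$, and then to upgrade weak monotonicity to strict at the very end. First I would observe that since every $b_X > 0$ and $Q \ge 0$, we have $B(Q) > 0$ on $[0,\infty)$, so $R$ is well-defined and differentiable there; by the quotient rule $R'(Q)$ carries the same sign as the numerator $W(Q) := A'(Q)B(Q) - A(Q)B'(Q)$. It therefore suffices to show $W(Q) \le 0$ on $[0,\infty)$, with enough strictness to prevent $R$ from being locally constant.

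Next I would expand $W$ coefficientwise. Writing $A'(Q) = \sum_i i\,a_i Q^{i-1}$ and $B'(Q) = \sum_j j\,b_j Q^{j-1}$ and collecting powers gives $W(Q) = \sum_i\sum_j (i-j)a_i b_j\,Q^{i+j-1}$, so the coefficient of $Q^{k-1}$ is $\sum_{i+j=k}(i-j)a_i b_j$. The key algebraic step is to symmetrize this coefficient under the interchange $i\leftrightarrow j$ (a Chebyshev-type rearrangement), rewriting it as $\tfrac12\sum_{i+j=k}(i-j)(a_i b_j - a_j b_i)$. The monotonicity then falls straight out of the hypothesis: dividing $a_X b_{X-1}\le a_{X-1}b_X$ by $b_X b_{X-1} > 0$ shows that $a_X/b_X$ is nonincreasing in $X$, and chaining these single-step inequalities yields $a_i b_j \le a_j b_i$ whenever $i > j$. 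Hence in each symmetrized term the factors $(i-j)$ and $(a_i b_j - a_j b_i)$ carry opposite (weak) signs, and the term vanishes when $i=j$, so every coefficient of $W$ is nonpositive. Consequently $W(Q)\le 0$ for $Q\ge 0$ and $R$ is nonincreasing.

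The main obstacle is strictness, since the weak coefficient inequalities deliver only weak monotonicity. A nonincreasing rational function is strictly decreasing unless it is constant, and $R$ is constant precisely when $A$ and $B$ are proportional, i.e. when $a_X/b_X$ is independent of $X$. This is exactly what the distinctness hypothesis must rule out: I would point out that a proportional pair such as $A = cB$ satisfies all the stated coefficient conditions yet yields the constant ratio $c$, so the force of ``distinct'' here is non-proportionality. Under non-proportionality the nonincreasing sequence $a_X/b_X$ must have at least one strict drop, $a_X b_{X-1} < a_{X-1}b_X$; the corresponding pair $\{X-1,X\}$ then contributes a strictly negative amount to the coefficient of $Q^{2X-2}$ in $W$, while every other contribution remains $\le 0$. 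Thus $W\not\equiv 0$ has only nonpositive coefficients, which forces $W(Q) < 0$ for all $Q > 0$; combined with continuity of $R$ at the endpoint $Q=0$, this gives $R$ strictly decreasing on $[0,\infty)$, as claimed. I expect the routine derivative expansion and the sign bookkeeping to be straightforward, with the only genuine care needed in pinning down that ``distinct'' supplies the non-proportionality required for strictness.
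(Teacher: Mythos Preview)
The paper does not supply its own proof of this lemma; it merely cites \cite{karp2010log}. Your argument is correct and is essentially the standard one: symmetrizing the numerator $W=A'B-AB'$ and using that the sequence $a_X/b_X$ is nonincreasing shows every coefficient of $W$ is nonpositive, and a single strict drop in $a_X/b_X$ makes some coefficient strictly negative, forcing $W<0$ on $(0,\infty)$ and hence $R$ strictly decreasing on $[0,\infty)$.

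Your caution about the word ``distinct'' is well placed. Read literally as $A\neq B$, the conclusion fails (take $A=cB$ with $c\neq 1$: all hypotheses hold yet $A/B\equiv c$), so the intended meaning must indeed be ``non-proportional.'' In the paper's only use of the lemma, in the proof of \prettyref{lem:f_is_convex}, the polynomial $A$ has degree at most $m-1$ while $B$ has degree $2m$ with all $b_X>0$, so they cannot be proportional and the issue does not arise there.
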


\begin{proof}[Proof of \prettyref{lem:f_is_convex}]
 The derivative of the \ref{eq:hypergeometric_function} is 
\begin{equation}
\frac{\partial}{\partial z}\,{}_{2}F_{1}(a,b;c;z)=\frac{ab}{c}{}_{2}F_{1}(a+1,b+1;c+1;z).\label{eq:hypergeometric_derivative}
\end{equation}
This fact combined with the representation in \prettyref{lem:f_is_a_hypergeometric_reciprocal}
yields
\begin{equation}
f^{\prime}(P)=-\frac{m}{N-m}\frac{A(P+1/\alpha-1)}{B(P+1/\alpha-1)}\label{eq:f_prime}
\end{equation}
where
\begin{align*}
A(Q) & ={}_{2}F_{1}(2,-\left(m-1\right);N-m+1;-Q)\\
\text{and }B(Q) & =\left(_{2}F_{1}(1,-m;N-m;-Q)\right)^{2}.
\end{align*}
To arrive at the desired result, we seek to show that the map
\begin{equation}
Q\mapsto A(Q)/B(Q)\label{eq:fixed_point_convergence_decreasing_map}
\end{equation}
is strictly decreasing on $[0,\infty)$.

For notational succinctness, let $S=N-m$. We can write \eqref{eq:fixed_point_convergence_decreasing_map}
as a quotient of polynomials by noting that
\[
A(Q)=\sum_{X=0}^{m-1}\frac{\left(X+1\right)\left(m-1\right)^{\left(X\right)}}{\left(S+1\right)_{X}}Q^{X}
\]
and (expanding using the Cauchy product)
\[
B(Q)=\left(\sum_{X=0}^{m}\frac{m^{\left(X\right)}}{\left(S\right)_{X}}Q^{X}\right)^{2}=\sum_{X=0}^{2m}Q^{X}\sum_{Y=0}^{X}\frac{m^{\left(Y\right)}}{\left(S\right)_{Y}}\frac{m^{\left(X-Y\right)}}{\left(S\right)_{X-Y}}.
\]
We seek to apply \prettyref{lem:polynomial_quotient_decreasing} on
the polynomials $A$ and $B$, whose coefficients we denote $a_{X}$
and $b_{X}$, respectively. Note that $A$ and $B$ are distinct since
$0=a_{X}<b_{X}$ for $m\leq X\leq2m$. One can easily check that $a_{1}=a_{1}b_{0}\leq a_{0}b_{1}=b_{1}$.
We thus need only verify $a_{X}b_{X-1}\leq a_{X-1}b_{X}$ for $X>1$.

Fix $X>1$. It is easy to check that
\[
a_{X}=a_{X-1}\left(\frac{1}{X}+1\right)\frac{m-X}{S+X}.
\]
Using Gauss summation, we can rewrite $b_{X}$ as 
\[
b_{X}=\mathbf{1}_{\{X\text{ is even}\}}\left(\frac{m^{\left(X/2\right)}}{\left(S\right)_{X/2}}\right)^{2}+2\sum_{Y=0}^{\left\lfloor \left(X-1\right)/2\right\rfloor }\frac{m^{\left(Y\right)}}{\left(S\right)_{Y}}\frac{m^{\left(X-Y\right)}}{\left(S\right)_{X-Y}}.
\]
Suppose $X$ is even. Then,
\begin{align*}
a_{X}b_{X-1} & =2a_{X-1}\left(\frac{1}{X}+1\right)\frac{m-X}{S+X}\left(\sum_{Y=0}^{X/2-1}\frac{m^{\left(Y\right)}}{\left(S\right)_{Y}}\frac{m^{\left(X-Y-1\right)}}{\left(S\right)_{X-Y-1}}\right)\\
 & \leq2a_{X-1}\left(\frac{1}{X}\frac{m-X}{S+X}\sum_{Y=0}^{X/2-1}\left(\frac{m^{\left(X/2-1\right)}}{\left(S\right)_{X/2-1}}\frac{m^{\left(X/2\right)}}{\left(S\right)_{X/2}}\right)+\sum_{Y=0}^{X/2-1}\frac{m^{\left(Y\right)}}{\left(S\right)_{Y}}\frac{m^{\left(X-Y\right)}}{\left(S\right)_{X-Y}}\right)\\
 & \leq a_{X-1}\left(\left(\frac{m^{\left(X/2\right)}}{\left(S\right)_{X/2}}\right)^{2}+2\sum_{Y=0}^{X/2-1}\frac{m^{\left(Y\right)}}{\left(S\right)_{Y}}\frac{m^{\left(X-Y\right)}}{\left(S\right)_{X-Y}}\right)\\
 & \leq a_{X-1}b_{X}.
\end{align*}
A similar approach can be taken if $X$ is odd.
\end{proof}

\begin{proof}[Proof of \prettyref{thm:existence_and_uniqueness}]
  We first show that $f(1)<1$, or equivalently, $1/f(1)>1$. By
the positivity of \eqref{eq:reciprocal_of_f_polynomial_subsubcoefficient},
the map $\alpha\mapsto1/f(P;\alpha)$ is strictly decreasing. Passing
to the limit and dropping higher order terms involving $1/\alpha^{Z}$
with $Z>0$ yields 
\[
\frac{1}{f(P;\alpha)}>\lim_{\alpha^{\prime}\rightarrow\infty}\frac{1}{f(P;\alpha^{\prime})}=\sum_{Y=0}^{m}P^{Y}\frac{\left(m\right)^{\left(Y\right)}}{\left(N-m\right)_{Y}}\frac{\left(N-m-1\right)_{m-Y}}{\left(N-m+Y\right)_{m-Y}}.
\]
One can verify that if $P=1$, the above sum is exactly one, yielding
$1/f(1)>1$ (for all $0<\alpha<\infty$), as desired.

By \prettyref{lem:f_is_strictly_decreasing}, the map \eqref{eq:root_map}
is strictly decreasing on $[0,\infty)$. Furthermore, since $f(1)<1$,
$f(1)-1$ and $f(0)-0>0$ have opposite signs. Because \eqref{eq:root_map}
is also continuous, the desired result follows by the intermediate
value theorem.
\end{proof}

\begin{proof}[Proof of \prettyref{thm:fixed_point_convergence_tight}]
 Let $I=[0,1]$. \eqref{eq:reciprocal_of_f_as_polynomial} establishes
that $f$ is positive on $I$. Since $\alpha\leq1$, $d_{0}$ appearing
in \eqref{eq:reciprocal_of_f_explicit_coefficient_form} satisfies
$d_{0}\geq1$. It follows that $f(0)=1/d_{0}\leq1$ (see \eqref{eq:reciprocal_of_f_as_polynomial}).
Letting $I=[0,1]$, these facts and \prettyref{lem:f_is_strictly_decreasing}
yield $f(I)\subset I$. Since $f$ is continuously differentiable
on $I$, it suffices to show that there exists a nonnegative constant
$L<1$ such that $|f^{\prime}|\leq L$ on $I$ (implying that $f$
is a contraction on $I$). 

Since $P+1/\alpha-1\geq0$ whenever $\alpha\leq1$, \eqref{eq:f_prime}
reveals that $-f^{\prime}=|f^{\prime}|$ on $I$. Owing to \prettyref{lem:f_is_convex},
$f$ is convex on $I$ so that $-f^{\prime}$ is nonincreasing on
$I$. Therefore, $|f^{\prime}(0)|\geq|f^{\prime}|$ on $I$, and the
desired result follows by taking $L=|f^{\prime}(0)|$.
\end{proof}

\begin{proof}[Proof of \prettyref{cor:fixed_point_convergence}]
We begin by considering the case of $\alpha<1$; $\alpha=1$ is handled
separately. Recall that the proof of \prettyref{lem:f_is_convex}
shows that map \eqref{eq:fixed_point_convergence_decreasing_map}
is strictly decreasing on $[0,\infty)$. Since $1/\alpha-1>0$ and
$_{2}F_{1}(a,b;c;0)=1$,
\begin{equation}
\left|f^{\prime}(0)\right|=\frac{m}{N-m}\frac{A(1/\alpha-1)}{B(1/\alpha-1)}<\frac{m}{N-m}\frac{A(0)}{B(0)}=\frac{m}{N-m},\label{eq:fixed_point_convergence_strict_inequality}
\end{equation}
and the desired result follows ($N\geq2m$ is equivalent to $m/(N-m)\leq1$).

Suppose now $\alpha=1$. We modify our approach, as the strict inequality
in \eqref{eq:fixed_point_convergence_strict_inequality} no longer
holds. By \eqref{eq:reciprocal_of_f_as_polynomial} and \eqref{eq:reciprocal_of_f_explicit_coefficient_form},
\[
f(0)=1/{}_{2}F_{1}(1,-m,N-m;0)=1.
\]
This along with the fact that $f$ is strictly decreasing (\prettyref{lem:f_is_strictly_decreasing})
and $0<f(1)<1$ implies that the iterates of $f$ evaluated at some
probability $P_{0}$ (i.e. $f^{k}(P_{0})$ for $k>0$) reside in $[f(1),1]$.
We can thus relax the sufficient condition for convergence in \prettyref{thm:fixed_point_convergence_tight}
to $|f^{\prime}(f(1))|<1$ in lieu of $|f^{\prime}(0)|<1$. Then
\[
\frac{m}{N-m}\frac{A(f(1))}{B(f(1))}<\frac{m}{N-m}\frac{A(0)}{B(0)}=\frac{m}{N-m},
\]
and the desired result follows.
\end{proof}
Let $f^{k}$ denote the $k$-th iterate of $f$. The proof above reveals
that we can replace the condition $|f^{\prime}(0)|<1$ with $|f^{\prime}(f^{2k}(0))|<1$
for some nonnegative integer $k$. Owing to this, we derive a relaxation
of \prettyref{thm:fixed_point_convergence_tight} along with a family
of bounds (parameterized by $k$) on the number of iterations required
for convergence up to a desired error tolerance $\epsilon$:
\begin{cor}
\label{cor:fixed_point_upper_bound}Let $k$ be a nonnegative integer
and $P^{\star}$ denote the solution of the \ref{eq:engset_formula}.
Suppose $\alpha\leq1$ and 
\[
q=\left|f^{\prime}(f^{2k}(0))\right|<1.
\]
Given $0<\epsilon\leq1$ and $\{P_{n}\}$ as defined by the \ref{eq:fixed_point_iteration},
$|P_{2k+\ell}-P^{\star}|\leq\epsilon$ whenever 
\[
\ell\geq\left\lceil \log_{q}(\epsilon-\epsilon q)\right\rceil .
\]
\end{cor}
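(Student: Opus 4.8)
The plan is to exploit the oscillatory structure of the iteration to show that, past index $2k$, every iterate is trapped in a half-line on which $f$ is a genuine $q$-contraction, and then to apply the textbook a priori error estimate for contraction mappings.

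First I would record the ``trapping'' facts. Since $f$ is strictly decreasing (\prettyref{lem:f_is_strictly_decreasing}), $f\circ f$ is increasing, and since the proof of \prettyref{thm:fixed_point_convergence_tight} shows $f([0,1])\subseteq[0,1]$, all iterates remain in $[0,1]$. From $f^{0}(0)=0\le P^{\star}$ and $f^{2}(P^{\star})=P^{\star}$, induction on the increasing map $f\circ f$ gives $0\le f^{2}(0)\le f^{4}(0)\le\cdots\le P^{\star}$; likewise $f^{2j}(1)$ is nonincreasing in $j$ with $f^{2j}(1)\ge P^{\star}$, and a short induction (base case $f^{2}(1)=f(f(1))\le f(0)$ because $f(1)\ge0$; step via $f\circ f$ increasing) yields $f^{2j}(1)\le f^{2j-1}(0)$ for $j\ge1$, hence $f^{2j+1}(1)=f(f^{2j}(1))\ge f(f^{2j-1}(0))=f^{2j}(0)$. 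Now write $P_{n}=f^{\,n-2k}(P_{2k})$ with $P_{2k}=f^{2k}(P_{0})\in[f^{2k}(0),f^{2k}(1)]$ (as $f^{2k}$ is increasing and $P_{0}\in[0,1]$). Applying $f$ an even, resp.\ odd, number of further times and invoking the monotonicity facts just listed shows $P_{n}\ge f^{2k}(0)$ for every $n\ge2k$.

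Second, on the half-line $[f^{2k}(0),\infty)$ the map $f$ is a $q$-contraction: since $\alpha\le1$ this half-line lies in $[1-1/\alpha,\infty)$, so $f$ is convex there (\prettyref{lem:f_is_convex}); moreover $P+1/\alpha-1\ge0$ there, so \eqref{eq:f_prime} gives $|f^{\prime}|=-f^{\prime}$, and convexity makes $-f^{\prime}$ nonincreasing, whence $|f^{\prime}|\le|f^{\prime}(f^{2k}(0))|=q<1$. Because $P^{\star}\ge f^{2k}(0)$ as well, the mean value theorem gives $|P_{n+1}-P^{\star}|\le q\,|P_{n}-P^{\star}|$ and $|P_{n+1}-P_{n}|\le q\,|P_{n}-P_{n-1}|$ for all $n\ge2k+1$. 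Iterating, $|P_{2k+j+1}-P_{2k+j}|\le q^{j}|P_{2k+1}-P_{2k}|$, so $\{P_{n}\}$ is Cauchy and converges, necessarily to $P^{\star}$ by uniqueness (\prettyref{thm:existence_and_uniqueness}); summing the geometric tail yields $|P_{2k+\ell}-P^{\star}|\le\frac{q^{\ell}}{1-q}|P_{2k+1}-P_{2k}|$. Since $P_{2k},P_{2k+1}\in[0,1]$ we have $|P_{2k+1}-P_{2k}|\le1$, hence $|P_{2k+\ell}-P^{\star}|\le q^{\ell}/(1-q)$, and this is $\le\epsilon$ precisely when $q^{\ell}\le\epsilon-\epsilon q$, i.e.\ (taking logarithms in base $q<1$, then rounding up since $\ell\in\mathbb{Z}$) when $\ell\ge\lceil\log_{q}(\epsilon-\epsilon q)\rceil$.

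The main obstacle is the trapping step: for a general starting point $P_{0}\in[0,1]$, rather than just $P_{0}=0$, one must rule out that some odd-indexed iterate beyond index $2k$ undershoots $f^{2k}(0)$ and thereby escapes the contraction region. This is exactly the role of the auxiliary inequality $f^{2j}(1)\le f^{2j-1}(0)$ (equivalently $f^{2j+1}(1)\ge f^{2j}(0)$); everything after that is routine bookkeeping with the monotonicity of $f\circ f$ and the classical contraction-mapping bound.
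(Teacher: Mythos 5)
Your proof is correct and follows essentially the same route as the paper: use the oscillatory structure to trap all iterates beyond index $2k$ in a region where $\lvert f^{\prime}\rvert\leq q$ (via convexity of $f$ on $[1-1/\alpha,\infty)$ when $\alpha\leq1$), then apply the a priori contraction estimate $\lvert P_{2k+\ell}-P^{\star}\rvert\leq q^{\ell}(1-q)^{-1}\lvert P_{2k+1}-P_{2k}\rvert\leq q^{\ell}/(1-q)$ and solve for $\ell$. In fact your trapping step, which tracks the orbits of both $0$ and $1$ so as to handle an arbitrary $P_{0}\in[0,1]$, is more careful than the paper's, which only asserts containment in the nested intervals $[f^{2k}(0),f^{2k+1}(0)]$ generated by the orbit of $0$.
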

\begin{proof}
\eqref{eq:reciprocal_of_f_as_polynomial} establishes $f(0)>0$ and
 $f^{2}(0)>0$. Using the fact that $f$ is strictly decreasing
(\prettyref{lem:f_is_strictly_decreasing}), it follows by induction
that 
\[
[0,1]\supset[f^{0}(0),f^{1}(0)]\supsetneq[f^{2}(0),f^{3}(0)]\supsetneq\cdots
\]
and $P_{2k+\ell}$ is in the interval $[f^{2k}(0),f^{2k+1}(0)]$ for
all $\ell\geq0$. The contraction mapping principle \cite{palais2007simple}
characterizes the speed of convergence: 
\[
\left|P_{2k+\ell}-P^{\star}\right|\leq\frac{q^{\ell}}{1-q}\left|P_{2k+1}-P_{2k}\right|\leq\frac{q^{\ell}}{1-q}\text{ for }\ell>0.
\]
The desired result follows.
\end{proof}
The proof of \prettyref{thm:newtons_method} requires the following
result (a simple modification of \cite[chapter 22, exercise 14b]{spivak1994}):
\begin{lem}
\label{lem:newtons_method_for_convex_functions}Let $I$ be an interval
and $g\colon I\rightarrow\mathbb{R}$ be a convex and differentiable
function satisfying $g^{\prime}<0$ and $g(x^{\star})=0$ for some
$x^{\star}$ in $I$. Then, given $x_{0}\in I$ with $g(x_{0})\geq0$,
the sequence $\{x_{n}\}$ defined by
\[
x_{n}=x_{n-1}-g(x_{n-1})/g^{\prime}(x_{n-1})\text{ for }n>0
\]
converges from below (i.e. $x_{0}\leq x_{1}\leq\cdots$) to $x^{\star}$.\end{lem}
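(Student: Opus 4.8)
The plan is to show by induction that the Newton iterates $\{x_n\}$ form a nondecreasing sequence bounded above by $x^\star$, and then to pass to the limit. Two immediate consequences of the hypotheses will be used throughout. First, since $g^{\prime}<0$, the function $g$ is strictly decreasing on $I$, so $x^\star$ is its \emph{unique} zero, and the assumption $g(x_0)\geq 0$ forces $x_0\leq x^\star$. Second, convexity of the differentiable function $g$ means it lies above each of its tangents: $g(y)\geq g(x)+g^{\prime}(x)(y-x)$ for all $x,y\in I$.

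For the inductive step, suppose $x_{n-1}\in I$ with $x_{n-1}\leq x^\star$ and $g(x_{n-1})\geq 0$ (all of which hold when $n-1=0$). The Newton increment is $x_n-x_{n-1}=-g(x_{n-1})/g^{\prime}(x_{n-1})$, which is nonnegative because its numerator is $\geq 0$ and its denominator is $<0$; hence $x_n\geq x_{n-1}$. For the upper bound, apply the tangent inequality with $x=x_{n-1}$ and $y=x^\star$ to get $0=g(x^\star)\geq g(x_{n-1})+g^{\prime}(x_{n-1})(x^\star-x_{n-1})$; dividing through by $g^{\prime}(x_{n-1})<0$ and reversing the inequality yields $x^\star-x_{n-1}\geq -g(x_{n-1})/g^{\prime}(x_{n-1})=x_n-x_{n-1}$, i.e. $x_n\leq x^\star$. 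Since $I$ is an interval containing both $x_{n-1}$ and $x^\star$ and $x_{n-1}\leq x_n\leq x^\star$, we have $x_n\in I$; and since $g$ is decreasing with $x_n\leq x^\star$, $g(x_n)\geq g(x^\star)=0$. This completes the induction, and in particular $x_0\leq x_1\leq\cdots\leq x^\star$.

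It remains to identify the limit. Being nondecreasing and bounded above, $\{x_n\}$ converges to some $L$ with $x_0\leq L\leq x^\star$, so $L\in I$. From $x_n\to L$ we get $x_n-x_{n-1}\to 0$, i.e. $g(x_{n-1})/g^{\prime}(x_{n-1})\to 0$. Here I would recall that a differentiable convex function on an interval has a nondecreasing derivative, so on the compact range swept out by the iterates $g^{\prime}$ is pinned between $g^{\prime}(x_0)$ and $g^{\prime}(L)<0$, hence bounded and bounded away from $0$; consequently $g(x_{n-1})\to 0$, and continuity of $g$ gives $g(L)=0$. By uniqueness of the zero, $L=x^\star$.

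There is no deep obstacle here; the only point requiring a little care is the passage to the limit, which one must not base on an unjustified assumption that $g\in C^1$ --- it suffices that $g^{\prime}$, being nondecreasing by convexity, stays bounded away from $0$ and from $-\infty$ on the interval spanned by the iterates. One should also remember to check at each step that the new iterate remains in $I$, which is immediate from the sandwich $x_{n-1}\leq x_n\leq x^\star$.
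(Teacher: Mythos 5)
Your proof is correct and follows essentially the same route as the paper's: establish $x_0\leq x_1\leq\cdots\leq x^\star$ via the tangent-line property of convex functions, then pass to the limit using continuity of $g$, monotonicity of $g^{\prime}$ (hence boundedness away from $0$), and uniqueness of the root of a strictly decreasing function. Your write-up is somewhat more explicit about why $g^{\prime}$ stays bounded and bounded away from zero along the iterates, but the argument is the same.
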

\begin{proof}
Since $g(x_{0})\geq0$ and $g^{\prime}(x_{0})<0$, it follows that
$x_{0}\leq x_{1}$. Since $(x_{1},0)$ is on a tangent line of $g$
and a convex function lies above its tangent lines, $g(x_{1})\geq0$.
Hence, $x_{1}\leq x^{\star}$. Repeating this argument establishes
$x_{0}\leq x_{1}\leq\cdots\leq x^{\star}$.

It follows that $x_{n}\rightarrow x$ for some $x$ in $I$. Taking
limits on both sides of $g^{\prime}(x_{n-1})(x_{n-1}-x_{n})=g(x_{n-1})$
and using the facts that $g$ is continuous and $g^{\prime}$ is monotone
due to the assumption of convexity, we arrive at $g(x)=0$. Since
a strictly decreasing function cannot have two distinct roots, $x=x^{\star}$.
\end{proof}

\begin{proof}[Proof of \prettyref{thm:newtons_method}]
First, consider the case $f(P_{0})-P_{0}\geq0$. \prettyref{lem:f_is_strictly_decreasing}
implies that $f^{\prime}\leq0$ and hence $f^{\prime}-1<0$ on $I=[0,1]$.
\prettyref{lem:f_is_convex} establishes that $f$ is convex on $I$.
\prettyref{thm:existence_and_uniqueness} guarantees the existence
of $P^{\star}$ in $I$ such that $f(P^{\star})-P^{\star}=0$. Letting
$g\colon I\rightarrow\mathbb{R}$ be defined by $g(P)=f(P)-P$, we
can directly apply \prettyref{lem:newtons_method_for_convex_functions}.

Now, consider the case of $f(P_{0})-P_{0}<0$. Note that
\[
P_{1}=P_{0}-\frac{f(P_{0})-P_{0}}{f^{\prime}(P_{0})-1}=\frac{f(P_{0})+P_{0}\left|f^{\prime}(P_{0})\right|}{1+\left|f^{\prime}(P_{0})\right|}>0.
\]
Since the point $(P_{1},0)$ is on a tangent line of $P\mapsto f(P)-P$
and a convex function lies above its tangent lines, $f(P_{1})-P_{1}\geq0$.
We can now repeat the argument in the first paragraph with the initial
guess $P_{1}$ in lieu of $P_{0}$.
\end{proof}

\begin{proof}[Proof of \prettyref{thm:turan_type_inequality}]
That $x\mapsto h_{1}(x)/(h_{0}(x))^{2}$ is strictly decreasing follows
directly from the proof of \prettyref{lem:f_is_convex}. The derivative
of this map is $C(x)/(h_{0}(x))^{3}$ where
\[
C(x)=-2h_{1}(x)h_{0}^{\prime}(x)+h_{0}(x)h_{1}^{\prime}(x)=-\frac{2b}{c}\left(h_{1}(x)\right)^{2}+\frac{2\left(b-1\right)}{c+1}h_{0}(x)h_{2}(x)
\]
(the last equality is a consequence of \eqref{eq:hypergeometric_derivative}).
Since $h_{0}$ is positive on $H=[0,\infty)$, it follows that $C$
is nonpositive on $H$, yielding \eqref{eq:turan_type_inequality}.
\end{proof}
\textbf{Acknowledgements}: We thank D. B. Karp and M. Scheuer for
enlightening discussions on the topic of special functions. We thank
S. Keshav and J. W. Wong for advice and direction.

\bibliographystyle{plain}
\bibliography{fast_engset_computation}

\begin{thebibliography}{10}

\bibitem{abramowitz1972handbook}
M.~Abramowitz and I.~A. Stegun.
\newblock {\em Handbook of mathematical functions}, volume~1.
\newblock Dover Publications, 1965.

\bibitem{baricz2008functional}
{\'A}.~Baricz.
\newblock Functional inequalities involving {Bessel} and modified {Bessel}
  functions of the first kind.
\newblock {\em Expositiones Mathematicae}, 26(3):279--293, 2008.

\bibitem{baricz2008turan}
{\'A}.~Baricz.
\newblock Tur{\'a}n type inequalities for hypergeometric functions.
\newblock {\em Proceedings of the American Mathematical Society},
  136(9):3223--3229, 2008.

\bibitem{carpenter2014sizing}
T.~Carpenter, S.~Keshav, and J.~Wong.
\newblock Sizing finite-population vehicle pools.
\newblock {\em Intelligent Transportation Systems, IEEE Transactions on},
  15(3):1134--1144, 2014.

\bibitem{cohen1957generalized}
J.~W. Cohen.
\newblock The generalized {Engset} formulae.
\newblock {\em Philips Telecommunication Review}, 18(4):158--170, 1957.

\bibitem{detti2002performance}
A.~Detti, V.~Eramo, and M.~Listanti.
\newblock Performance evaluation of a new technique for {IP} support in a {WDM}
  optical network: optical composite burst switching ({OCBS}).
\newblock {\em Lightwave Technology, Journal of}, 20(2):154--165, 2002.

\bibitem{kalmykov2013log}
S.~I. Kalmykov and D.~B. Karp.
\newblock Log-convexity and log-concavity for series in gamma ratios and
  applications.
\newblock {\em Journal of Mathematical Analysis and Applications},
  406(2):400--418, 2013.

\bibitem{karp2010log}
D.~B. Karp and S.~M. Sitnik.
\newblock Log-convexity and log-concavity of hypergeometric-like functions.
\newblock {\em Journal of Mathematical Analysis and Applications},
  364(2):384--394, 2010.

\bibitem{keshav1997engineering}
S.~Keshav.
\newblock {\em An engineering approach to computer networking: ATM networks,
  the Internet, and the telephone network}, volume~1.
\newblock Addison-Wesley, 1997.

\bibitem{kleinrock1975queueing}
L.~Kleinrock.
\newblock {\em Queueing Systems}, volume~1.
\newblock Wiley Interscience, 1975.

\bibitem{kubasik1985some}
Jerzy Kubasik.
\newblock On some numerical methods for the computation of {Erlang} and
  {Engset} functions.
\newblock In {\em Proceedings of the Eleventh International Teletraffic
  Congress (ITC-11), Kyoto, Japan}, pages 958--964, 1985.

\bibitem{le2005scalable}
H.~Le~Vu, A.~Zalesky, E.~W.~M. Wong, Z.~Rosberg, S.~M.~H. Bilgrami,
  M.~Zukerman, and R.~S. Tucker.
\newblock Scalable performance evaluation of a hybrid optical switch.
\newblock {\em Lightwave Technology, Journal of}, 23(10):2961--2973, 2005.

\bibitem{nail2007stochastic}
A.~Nail and Y.~Gunalay.
\newblock Stochastic analysis of finite population bufferless multiplexing in
  optical packet/burst switching systems.
\newblock {\em IEICE transactions on communications}, 90(2):342--345, 2007.

\bibitem{verby2005performance}
H.~Overby.
\newblock Performance modelling of optical packet switched networks with the
  {Engset} traffic model.
\newblock {\em Optics Express}, 13(5):1685--1695, 2005.

\bibitem{palais2007simple}
R.~S. Palais.
\newblock A simple proof of the {Banach} contraction principle.
\newblock {\em Journal of Fixed Point Theory and Applications}, 2(2):221--223,
  2007.

\bibitem{pearson2009computation}
J.~W. Pearson.
\newblock Computation of hypergeometric functions.
\newblock Master's thesis, University of Oxford, 2009.

\bibitem{spivak1994}
M.~Spivak.
\newblock {\em Calculus}, volume~3.
\newblock Publish or Perish, 1994.

\bibitem{sztrik2001finite}
J.~Sztrik.
\newblock Finite-source queueing systems and their applications.
\newblock {\em Formal Methods in Computing}, 1:7--10, 2001.

\bibitem{tijms2003first}
Henk~C Tijms.
\newblock {\em A first course in stochastic models}.
\newblock John Wiley and Sons, 2003.

\bibitem{zukerman2004teletraffic}
M.~Zukerman, E.~W.~M. Wong, Z.~Rosberg, G.~M. Lee, and H.~L. Vu.
\newblock On teletraffic applications to {OBS}.
\newblock {\em IEEE Communications Letters}, 8(2):116--118, 2004.

\end{thebibliography}

\end{document}